\numberwithin{equation}{section}
\newtheorem{theorem}{Theorem}[section]
\newtheorem{lemma}[theorem]{Lemma}
\newtheorem{proposition}[theorem]{Proposition}
\newtheorem{corollary}[theorem]{Corollary}
\theoremstyle{definition}
\newtheorem{definition}[theorem]{Definition}
\newtheorem{example}[theorem]{Example}
\theoremstyle{remark}
\newtheorem{remark}[theorem]{Remark}
\numberwithin{equation}{section}
\newcommand{\ind}{\operatorname{ind}}
\newcommand{\id}{\operatorname{id}}
\newcommand{\dom}{\operatorname{dom}}
\newcommand{\cc}{{cc}}
\newcommand{\dm}{\partial M}
\newcommand{\CC}{\mathbb{C}}
\newcommand{\CCC}{\mathcal{C}}
\newcommand{\RR}{\mathbb{R}}
\newcommand{\cRR}{\mathcal{R}}
\newcommand{\ZZ}{\mathbb{Z}}
\newcommand{\AAA}{\mathcal{A}}
\newcommand{\DD}{\mathcal{D}}
\newcommand{\EE}{\mathcal{E}}
\newcommand{\cHH}{\check{H}}
\newcommand{\hHH}{\hat{H}}
\newcommand{\cPi}{\check{\Pi}}
\newcommand{\cPP}{\check{P}}
\newcommand{\cTT}{\check{T}}
\newcommand{\bfu}{\mathbf{u}}
\newcommand{\bfv}{\mathbf{v}}
\newcommand{\upper}{\uppercase\expandafter}
\newcommand{\n}{\nabla}
\newcommand{\p}{\partial}
\newcommand{\codim}{\operatorname{codim}}
\newcommand{\Hom}{\operatorname{Hom}}
\newcommand{\End}{\operatorname{End}}
\newcommand{\range}{\operatorname{range}}
\newcommand{\ad}{{\rm ad}}
\newcommand{\coker}{\operatorname{coker}}
\begin{document}

\title[Cauchy data spaces and APS index]{Cauchy data spaces and Atiyah--Patodi--Singer index on non-compact manifolds}


\author{Pengshuai Shi}
\address{Department of Mathematics,
Northeastern University,
Boston, MA 02115,
USA}
\curraddr{}
\email{shi.pe@husky.neu.edu}
\thanks{}

\subjclass[2010]{Primary 58J32; Secondary 58B15}

\date{}

\dedicatory{}

\begin{abstract}
We study the Cauchy data spaces of the strongly Callias-type operators using maximal domain on manifolds with non-compact boundary, with the aim of understanding the Atiyah--Patodi--Singer index and elliptic boundary value problems.
\end{abstract}

\maketitle


\section{Introduction}\label{S:intro}

In our previous papers \cites{BrShi17,BrShi17-2} with Braverman, we studied the boundary value problems of strongly Callias-type operators on manifolds with non-compact boundary. In particular, for the Atiyah--Patodi--Singer (or APS) boundary value problem, we found a formula to compute the APS index. An interesting term in the formula is a boundary invariant on a model manifold which behaves like the difference of two individual eta-invariants. We call it \emph{relative eta-invariant}. One question that remains to be answered is a spectral interpretation of this invariant.

Another notion involved in the study of boundary value problems is the space of Cauchy data. In particular, the APS index (on manifold with compact boundary) can be computed in terms of the projections onto Cauchy data spaces, which provides another way of understanding the eta invariant. In this paper, we address the APS index for strongly Callias-type operators from this perspective. Traditionally, Cauchy data spaces of Dirac-type operators can be built through the $L^2$-closure of boundary restrictions of smooth solutions on partitioned (compact) manifolds. This approach involves pseudo-differential calculus, i.e., a Cauchy data space is the range of the $L^2$-extension of Caldr\'on projector. (cf. \cites{BW93,Seeley}.)

A different but more general approach is established on the maximal domain of an operator on a manifold with boundary by Booss-Bavnbek and Furutani \cite{BF98}. When the operator is symmetric, there is a symplectic structure on the space of boundary values of sections in maximal domain. The (maximal) Cauchy data space is a subspace of this boundary value space. And under natural assumptions, such a Cauchy data space gives rise to Fredholm-Lagrangian property. A good feature of this treatment is that it gets rid of pseudo-differential calculus. We refer the reader to \cite{BFW} for a nice exposition on these two approaches.

We shall adopt the maximal domain approach to study the Cauchy data spaces of strongly Callias-type operators on manifolds with non-compact boundary. Since we mainly consider the graded operator, we will care more about the Fredholmness than the Lagrangian. We give formulas of the APS index through the APS projection and projections onto Cauchy data spaces (Theorems \ref{T:indid} and \ref{T:L2indid}). We also prove the twisted orthogonality of Cauchy data spaces (Theorem \ref{T:L2decomp}). These results can be compared with the results in \cites{BW93,Woj}. At last, we interpret certain Cauchy data spaces as elliptic boundary conditions in the sense of \cite{BrShi17} (Theorem \ref{T:Cauchyellbc}). In \cite{BBC}, Ballmann, Br\"uning and Carron discussed the Cauchy data spaces on a semi-infinite cylinder model. Since the growth of the potential in our operator controls the behavior at infinity, we do not need to consider extended solutions. (Compare Theorem \ref{T:Cauchyellbc} with \cite{BBC}*{Theorem C}.)

\section{Preliminaries}\label{S:preli}

In this section, we give a short review about the boundary value problems of strongly Callias-type operators. All the contents except Subsection \ref{SS:UCP} can be found in \cites{BrShi17,BrShi17-2} which generalize some results of B\"ar and Ballmann \cite{BB12} to manifolds with non-compact boundary.

\subsection{Strongly Callias-type operators}\label{SS:Callias}

Let $M$ be a complete Riemannian manifold (possibly with boundary) and let $E\to M$ be a Dirac bundle over $M$, \cite{LM89}*{Definition~II.5.2}. In particular, $E$ is a Hermitian bundle endowed with a Clifford multiplication $c:T^*M\to \End(E)$ and a compatible Hermitian connection $\n^E$. Suppose that $E=E^+\oplus E^-$ is $\ZZ_2$-graded such that the Clifford multiplication $c(\xi)$ is odd and the Clifford connection is even with respect to this grading. Then one can form the $\ZZ_2$-graded Dirac operator
\[
D\;:=\;\left(
\begin{matrix}
0 & D^- \\
D^+ & 0 
\end{matrix}
\right),
\]
where
\(
D^\pm\;:\;C^\infty(M,E^\pm)\to C^\infty(M,E^\mp)
\)
are formally adjoint to each other. 

Let $\Psi\in{\rm End}(E)$ be a self-adjoint bundle map (called a \emph{Callias potential}) which is odd-graded, i.e.
\[
\Psi\;=\;\left(
\begin{matrix}
0 & \Psi^- \\
\Psi^+ & 0 
\end{matrix}
\right),
\]
where $\Psi^\pm\in\Hom(E^\pm,E^\mp)$ are adjoint to each other. Then we have a formally self-adjoint Dirac-type operator on $E$
\begin{equation}\label{E:Callias}
	\DD\;:=\;D\,+\,\Psi\;=\;\left(
	\begin{matrix}
	0 & D^-+\Psi^- \\
	D^++\Psi^+ & 0
	\end{matrix}
	\right)\;=:\;\left(
	\begin{matrix}
	0 & \DD^- \\
	\DD^+ & 0
	\end{matrix}
	\right).
\end{equation}
Note that
\[
	\DD^2\;=\;D^2+\Psi^2+[D,\Psi]_+,
\]
where $[D,\Psi]_+:=D\circ\Psi+\Psi\circ D$ is the anticommutator of the operators $D$ and $\Psi$.

\begin{definition}\label{D:saCallias}
We call $\DD$ (or $\DD^+,\DD^-$) a \emph{strongly Callias-type operator} if
\begin{enumerate}
\item $[D,\Psi]_+$ is a zeroth order differential operator, i.e. a bundle map;
\item for any $R>0$, there exists a compact subset $K_R\subset M$ such that
\[
	\Psi^2(x)\;-\;\big|[D,\Psi]_+(x)\big|\;\ge\;R
\]
for all $x\in M\setminus K_R$.
\end{enumerate}
\end{definition}

Assume the Riemannian metric and the Dirac bundle $E$ both have product structure in a tubular neighborhood $U\subset M$ of the boundary. Let $t$ be the inward-pointing normal coordinate near the boundary so that the inward unit normal vector to the boundary is given by $\tau=dt$. Then near the boundary, a Callias-type operator $\DD$ takes the form
\begin{equation}\label{E:productDD}
	\DD\; = \;c(\tau)(\p_t+\hat\AAA)\;=\;
	\left(
	\begin{matrix}
	0 & c(\tau) \\
	c(\tau) & 0
	\end{matrix}
	\right)
	\left(
	\begin{matrix}
	\p_t+\AAA & 0 \\
	0 & \p_t+\AAA^\sharp
	\end{matrix}
	\right), 
\end{equation}
where $\AAA:C^\infty(\dm,E^+|_{\dm})\to C^\infty(\dm,E^+|_{\dm})$ and $\AAA^\sharp:C^\infty(\dm,E^-|_{\dm})\to C^\infty(\dm,E^-|_{\dm})$ are formally self-adjoint operators satisfying
\begin{equation}\label{E:anticommu}
\AAA^\sharp\;=\;c(\tau)\circ\AAA\circ c(\tau).
\end{equation}
$\AAA$ and $\AAA^\sharp$ are also (non-graded) strongly Callias-type operators. In particular, they have discrete spectrum. We call $\AAA$ (resp. $\AAA^\sharp$) the \emph{restriction of $\DD^+$ (resp. $\DD^-$) to the boundary}.

\subsection{Minimal and maximal extensions}\label{SS:minmaxext}

For a Dirac bundle $E$ over $M$, we set $C_c^\infty(M,E)$ to be the space of smooth sections of $E$ with compact support and $C_\cc^\infty(M,E)$ to be the space of smooth sections of $E$ with compact support in $M\setminus\p M$. We denote by $L^2(M,E)$ the Hilbert space of square-integrable sections of $E$, which is the completion of $C_c^\infty(M,E)$ with respect to the norm induced by the $L^2$-inner product
\[
(u_1;u_2)_{L^2(M)}\;:=\;\int_M\,\langle u_1;u_2\rangle\, dV,
\]
where $\langle\cdot;\cdot\rangle$ denotes the fiberwise inner product and $dV$ is the volume form on $M$. Similar spaces can be defined on the boundary $\dm$. We usually use letters $u,v,\cdots$ to denote sections on $M$ and use bold letters $\bfu,\bfv,\cdots$ to denote sections on $\dm$.

Let $\DD^+$ be a strongly Callias-type operator. We denote $\DD_\cc^+:=\DD^+|_{C_\cc^\infty(M,E^+)}$ and view it as an unbounded operator from $L^2(M,E^+)$ to $L^2(M,E^-)$. The \emph{minimal extension} $\DD_{\min}^+$ of $\DD^+$ is the operator whose graph is the closure of that of $\DD_\cc^+$. The \emph{maximal extension} $\DD_{\max}^+$ of $\DD^+$ is defined to be $\DD_{\max}^+=\big((\DD^-)_{cc}\big)^\ad$, where the superscript ``$\ad$'' denotes the adjoint of the operator in the sense of functional analysis. Both $\DD_{\min}^+$ and $\DD_{\max}^+$ are closed operators. Their domains, $\dom\DD_{\min}^+$ and $\dom\DD_{\max}^+$, become Hilbert spaces equipped with the \emph{graph norm} $\|\cdot\|_{\DD^+}$, which is the norm associated with the inner product
\[
	(u_1;u_2)_{\DD^+}\;:=\;
	(u_1;u_2)_{L^2(M)}\;+\;(\DD^+u_1;\DD^+u_2)_{L^2(M)}
\]

\subsection{Sobolev spaces on the boundary}\label{SS:bdrySob}

Since the boundary in our problem is in general non-compact, there is not a canonical way of defining Sobolev spaces on it. Naturally, we use the operator restricted to the boundary to define them.

\begin{definition}[\cite{BrShi17}*{\S3}]\label{D:bdrySob}
Let $\{\lambda_j\}_{j\in\ZZ}$ be the set of eigenvalues of $\AAA$ and $\{\bfu_j\}_{j\in\ZZ}$ the corresponding unit eigenvectors, which form an orthonormal basis of $L^2(\dm,E^+|_{\dm})$. For any $s\in\RR$, we define the $s^{\text{th}}$-order Sobolev space to be
\[
	H_\AAA^s(\dm,E^+|_{\dm})\;:=\;
	\Big\{\,\bfu=\sum_ja_j\bfu_j:\,\,\sum_j|a_j|^2(1+\lambda_j^2)^s<+\infty\,\Big\}.
\]
It is a Hilbert space with norm given by $\sum_j|a_j|^2(1+\lambda_j^2)^s$.
\end{definition}

\begin{remark}\label{R:bdrySob}
$H_\AAA^0(\dm,E^+|_{\dm})=L^2(\dm,E^+|_{\dm})$. For any $s\in\RR$, there is a perfect pairing
\[
\begin{aligned}
H_\AAA^s(\dm,E^+|_{\dm})\times H_\AAA^{-s}(\dm,E^+|_{\dm})\ &\to\ \CC, \\
\Big(\sum_j a_j\bfu_j,\sum_j b_j\bfu_j\Big)\qquad\qquad&\mapsto\ \sum_j\overline{a}_jb_j.
\end{aligned}
\]
Therefore, $H_\AAA^s(\dm,E^+|_{\dm})$ and $H_\AAA^{-s}(\dm,E^+|_{\dm})$ are dual to each other.
\end{remark}

For $I\subset\RR$, let
\[
	P_I^\AAA\;:\;\sum_j a_j\bfu_j\;\mapsto\;\sum_{\lambda_j\in I}a_j\bfu_j
\]
be the spectral projection. Then for all $s\in\RR$,
\[
	H_I^s(\AAA)\;:=\;
	P_I^\AAA(H_\AAA^s(\dm,E^+|_{\dm}))\;\subset\;H_\AAA^s(\dm,E^+|_{\dm}).
\]

\begin{definition}[\cite{BrShi17}*{\S3}]\label{D:hybridSob}
For $a\in\RR$, we define the \emph{hybrid} Sobolev spaces
\[
\begin{aligned}
	\cHH(\AAA)&\;:=\;
	H_{(-\infty,a]}^{1/2}(\AAA)\;\oplus\;H_{(a,\infty)}^{-1/2}(\AAA), \\
	\hHH(\AAA)&\;:=\;
	H_{(-\infty,a]}^{-1/2}(\AAA)\;\oplus\;H_{(a,\infty)}^{1/2}(\AAA)
\end{aligned}
\]
with respective $\cHH$-norm, $\hHH$-norm
\[
\begin{aligned}
	\|\bfu\|_{\cHH(\AAA)}^2&\;:=\;
	\big\|P_{(-\infty,a]}^\AAA\bfu\big\|_{H_\AAA^{1/2}(\dm)}^2\;+\;
	\big\|P_{(a,\infty)}^\AAA\bfu\big\|_{H_\AAA^{-1/2}(\dm)}^2, \\
	\|\bfu\|_{\hHH(\AAA)}^2&\;:=\;
\|P_{(-\infty,a]}^\AAA\bfu\|_{H_\AAA^{-1/2}(\dm)}^2\;+\;
\|P_{(a,\infty)}^\AAA\bfu\|_{H_\AAA^{1/2}(\dm)}^2.
\end{aligned}
\]
\end{definition}

The spaces $\cHH(\AAA),\hHH(\AAA)$ are independent of the choice of $a$. By Remark \ref{R:bdrySob}, the spaces $\cHH(\AAA)$ and $\hHH(\AAA)$ are dual to each other.

The Sobolev spaces discussed above can be defined in the same way for the bundle $E^-$ using the restriction $\AAA^\sharp$ of $\DD^-$ to the boundary. It follows from \eqref{E:anticommu} that

\begin{lemma}\label{L:admap}
Over $\dm$, for all $s\in\RR$, the isomorphism $c(\tau):E^\pm|_{\dm}\to E^\mp|_{\dm}$ induces isomorphisms $H_{(-\infty,a]}^{s}(\AAA)\cong H_{[-a,\infty)}^{s}(\AAA^\sharp)$. In particular, $\cHH(\AAA)\cong\hHH(\AAA^\sharp)$, $\hHH(\AAA)\cong\cHH(\AAA^\sharp)$. 
\end{lemma}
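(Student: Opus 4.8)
The plan is to show that $c(\tau)$ intertwines the spectral data of $\AAA$ and $\AAA^\sharp$ with a sign flip, and then to read off all the claimed isomorphisms directly from the definitions of the Sobolev and hybrid spaces.

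First I would record the two algebraic properties of $c(\tau)$ over $\dm$ that drive everything: as Clifford multiplication by the unit covector $\tau$ it is skew-adjoint and satisfies $c(\tau)^2=-\id$, hence it is a fiberwise isometry and therefore a unitary isomorphism $L^2(\dm,E^+|_{\dm})\to L^2(\dm,E^-|_{\dm})$. Combining $c(\tau)^2=-\id$ with the anticommutation relation \eqref{E:anticommu}, a one-line computation gives, for each eigenvector $\bfu_j$ of $\AAA$ with $\AAA\bfu_j=\lambda_j\bfu_j$,
\[
\AAA^\sharp\big(c(\tau)\bfu_j\big)\;=\;c(\tau)\AAA\,c(\tau)c(\tau)\bfu_j\;=\;-c(\tau)\AAA\bfu_j\;=\;-\lambda_j\,c(\tau)\bfu_j.
\]
Thus $\{c(\tau)\bfu_j\}$ is an orthonormal basis of $L^2(\dm,E^-|_{\dm})$ consisting of eigenvectors of $\AAA^\sharp$, with eigenvalues $\{-\lambda_j\}$; in particular the spectrum of $\AAA^\sharp$ is the reflection of that of $\AAA$.

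Next I would push this through Definition \ref{D:bdrySob}. Since the weight $(1+\lambda_j^2)^s$ is invariant under $\lambda_j\mapsto-\lambda_j$, the identification $\bfu_j\mapsto c(\tau)\bfu_j$ of orthonormal eigenbases shows that $c(\tau)$ is an isometric isomorphism $H_\AAA^s(\dm,E^+|_{\dm})\to H_{\AAA^\sharp}^s(\dm,E^-|_{\dm})$ for every $s$, and, more precisely, that it carries the spectral subspace for eigenvalues in an interval $I$ to the spectral subspace for $-I$. Taking $I=(-\infty,a]$, so that $-I=[-a,\infty)$, yields the stated isomorphism $H_{(-\infty,a]}^{s}(\AAA)\cong H_{[-a,\infty)}^{s}(\AAA^\sharp)$; taking $I=(a,\infty)$ gives the companion $H_{(a,\infty)}^{s}(\AAA)\cong H_{(-\infty,-a)}^{s}(\AAA^\sharp)$.

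Finally, for the hybrid spaces I would exploit the fact, already noted after Definition \ref{D:hybridSob}, that $\cHH$ and $\hHH$ are independent of the cutoff, so I may assume $a$ is not an eigenvalue of $\AAA$; then $-a$ is not an eigenvalue of $\AAA^\sharp$ and the endpoint ambiguities disappear. Applying the interval isomorphisms of the previous step to each summand of $\cHH(\AAA)=H_{(-\infty,a]}^{1/2}(\AAA)\oplus H_{(a,\infty)}^{-1/2}(\AAA)$ produces $H_{(-a,\infty)}^{1/2}(\AAA^\sharp)\oplus H_{(-\infty,-a]}^{-1/2}(\AAA^\sharp)$, which is exactly $\hHH(\AAA^\sharp)$ computed with cutoff $-a$; the identity $\hHH(\AAA)\cong\cHH(\AAA^\sharp)$ follows by the same bookkeeping. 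The argument is essentially mechanical, and the only point demanding care—the one I would treat as the main obstacle—is the correct matching of the half-open intervals together with the order and regularity of the two summands, which is precisely why reducing at the outset to the case where $a$ avoids the spectrum is worthwhile.
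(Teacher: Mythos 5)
Your proof is correct and is exactly the argument the paper intends: the lemma is stated there without proof as an immediate consequence of \eqref{E:anticommu}, and your computation $\AAA^\sharp\,c(\tau)\bfu_j=-\lambda_j\,c(\tau)\bfu_j$ (using $c(\tau)^2=-\id$ and skew-adjointness of $c(\tau)$) is the standard unpacking of that claim. The extra care you take with the half-open intervals and with choosing $a$ off the spectrum is sensible bookkeeping but does not change the substance.
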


\subsection{Boundary value problems}\label{SS:bvp}

One of the main results of \cite{BrShi17} is the regularity of maximal domain as below.

\begin{theorem}[\cite{BrShi17}*{\S3}]\label{T:maxdom}
Let $\DD^+$ be a strongly Callias-type operator. Then the trace map
\[
\begin{aligned}
\cRR\;:\;C_c^\infty(M,E^+)&\;\to\;C_c^\infty(\dm,E^+|_{\dm}) \\
u\quad&\;\mapsto\;\quad u|_{\dm}
\end{aligned}
\]
extends uniquely to a surjective bounded linear map $\cRR:\dom\DD_{\max}^+\to\cHH(\AAA)$.

The corresponding statement holds for $\DD_{\max}^-$ (with $\AAA$ replaced with $\AAA^\sharp$). Moreover, for all sections $u\in\dom\DD_{\max}^+$ and $v\in\dom\DD_{\max}^-$, we have the \emph{generalized Green's formula}
\begin{equation}\label{E:genGreensfor}
	\big(\DD_{\max}^+u;v\big)_{L^2(M)}
	\;-\;
	\big(u;\DD_{\max}^-v\big)_{L^2(M)}\;=\;
	-\,\big(c(\tau)\cRR u;\cRR v\big)_{L^2(\dm)}.
\end{equation}
\end{theorem}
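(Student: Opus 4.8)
The plan is to follow the strategy of B\"ar--Ballmann \cite{BB12}, the essential new point being that the boundary operator $\AAA$ is itself a (non-graded) strongly Callias-type operator and therefore has \emph{discrete} spectrum with an orthonormal eigenbasis $\{\bfu_j\}$ of $L^2(\dm,E^+|_{\dm})$; this is what lets the separation-of-variables analysis go through despite $\dm$ being non-compact. First I would localize near the boundary: by interior elliptic regularity the trace depends only on the behaviour of $u$ on the collar $U\cong[0,\varepsilon)\times\dm$, so after multiplying by a cutoff in the normal coordinate $t$ one may assume $u$ is supported there and use the product form \eqref{E:productDD}, $\DD^+=c(\tau)(\p_t+\AAA)$.

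Second, for $u\in C_c^\infty(M,E^+)$ I would expand $u=\sum_j a_j(t)\bfu_j$ and write $\DD^+u=c(\tau)\sum_j\big(a_j'(t)+\lambda_j a_j(t)\big)\bfu_j$, reducing the problem to the family of scalar ODEs $a_j'+\lambda_j a_j=g_j$ on $[0,\varepsilon)$. A mode-by-mode estimate of the boundary value $a_j(0)$ in terms of $\|a_j\|_{L^2}$ and $\|g_j\|_{L^2}$ then shows that the modes with $\lambda_j>0$ contribute to $H^{-1/2}$ while the modes with $\lambda_j<0$ contribute to $H^{1/2}$, which is exactly the splitting defining $\cHH(\AAA)$. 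Summing these estimates with the weights $(1+\lambda_j^2)^{\pm1/2}$ yields a bound $\|\cRR u\|_{\cHH(\AAA)}\le C\,\|u\|_{\DD^+}$. Since $C_c^\infty(M,E^+)$ is dense in $\dom\DD_{\max}^+$ for the graph norm (completeness of $M$ together with the Callias condition controlling infinity), the trace extends uniquely and continuously to all of $\dom\DD_{\max}^+$.

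Third, for surjectivity I would exhibit an explicit Poisson-type section: given $\bfu_0=\sum_j a_j\bfu_j\in\cHH(\AAA)$, set $u(t):=\chi(t)\sum_j a_j\,e^{-|\lambda_j|t}\bfu_j$ with $\chi$ a normal cutoff. One checks directly that $u\in L^2$ and $\DD^+u\in L^2$ --- the only non-homogeneous contributions come from the $\lambda_j<0$ modes and from $\chi'$, both of which are square-integrable precisely because the negative part of $\bfu_0$ lies in $H^{1/2}$ --- so $u\in\dom\DD_{\max}^+$ with $\cRR u=\bfu_0$. The statement for $\DD_{\max}^-$ is identical, with $\AAA$ replaced by $\AAA^\sharp$.

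Finally, the generalized Green's formula \eqref{E:genGreensfor} I would first prove for $u\in C_c^\infty(M,E^+)$ and $v\in C_c^\infty(M,E^-)$ by integrating $(\DD^+u;v)_{L^2(M)}-(u;\DD^-v)_{L^2(M)}$ by parts on the collar; the product form and the fact that $\DD^\pm$ are formal adjoints leave precisely the boundary term $-(c(\tau)\cRR u;\cRR v)_{L^2(\dm)}$. To pass to the maximal domain I would argue by continuity: the left-hand side is continuous in the graph norms, and the right-hand side is the pairing of $c(\tau)\cRR u\in\hHH(\AAA^\sharp)$ with $\cRR v\in\cHH(\AAA^\sharp)$, which is well-defined and continuous because $\hHH(\AAA^\sharp)$ and $\cHH(\AAA^\sharp)$ are mutually dual (Remark \ref{R:bdrySob} and Lemma \ref{L:admap}). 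The main obstacle is the non-compactness of $\dm$: it makes the spectral sums and the surjectivity construction the delicate points, and both are handled by the discreteness of the spectrum of $\AAA$; controlling the sections at infinity on $M$ (needed for the density statement and for the $L^2$ bounds) is where the defining condition of Definition \ref{D:saCallias} enters.
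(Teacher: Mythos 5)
This theorem is imported from \cite{BrShi17}*{\S3} and the present paper contains no proof of it; your outline correctly reconstructs the strategy of that reference (an adaptation of B\"ar--Ballmann \cite{BB12} to non-compact boundary): localization to the collar, mode-by-mode one-dimensional estimates made possible by the discreteness of the spectrum of $\AAA$, the Poisson-type extension $\chi(t)e^{-t|\AAA|}$ for surjectivity onto $\cHH(\AAA)$, and the passage from the smooth Green's formula to the maximal domains via density and the $\cHH$--$\hHH$ duality of Remark \ref{R:bdrySob} and Lemma \ref{L:admap}. The two facts you assert rather than prove --- the graph-norm density of $C_c^\infty(M,E^+)$ in $\dom\DD_{\max}^+$ and the discreteness of the boundary spectrum --- are precisely where completeness and the strongly Callias condition enter, and you identify them as such, so the proposal matches the cited proof in both structure and substance.
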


This theorem inspires the following description of boundary value problems.

\begin{definition}[\cite{BrShi17}*{\S4}]\label{D:bc}
A closed subspace $B\subset\cHH(\AAA)$ is called a \emph{boundary condition} for $\DD^+$. We will use the notation $\DD_B^+$ for the operator with the domain
\[
	\dom(\DD_B^+)\;:=\;\{u\in\dom\DD_{\max}^+:\cRR u\in B\}.
\]
Its adjoint operator is $\DD_{B^\ad}^-$ with domain
\[
	\dom\DD_{B^\ad}^- \;= \;
	\big\{\,v\in\dom\DD_{\max}^-:
	\big(c(\tau)\cRR u;\cRR v\big)_{L^2(\dm)}=0\mbox{ for all }u\in\dom\DD_B^+\,\big\}.
\]
And
\begin{equation}\label{E:adbc}
	B^{\rm ad}\;:=\;
	\big\{\,\bfv\in\cHH(\AAA^\sharp):
		\big(c(\tau)\bfu;\bfv\big)_{L^2(\dm)}=0\mbox{ for all }\bfu\in B\,\big\}\;=\;\big(c(\tau)B\big)^0
\end{equation}
is called the \emph{adjoint boundary condition} of $B$, where the superscript ``0'' means the annihilator.
\end{definition}

\begin{definition}[\cite{BrShi17}*{\S4}]\label{D:ellbc}
A boundary condition $B$ is said to be \emph{elliptic} if $B\subset H_\AAA^{1/2}(\dm,E^+|_{\dm})$ and $B^{\rm ad}\subset H_{\AAA^\sharp}^{1/2}(\dm,E^-|_{\dm})$.
\end{definition}

\begin{example}[\cite{BrShi17}*{\S4}]\label{Ex:gAPS}
$B=H_{(-\infty,0)}^{1/2}(\AAA)$ is an elliptic boundary condition for $\DD^+$, which is called the \emph{Atiyah--Patodi--Singer boundary condition} (or APS boundary condition). Its adjoint boundary condition is $B^{\ad}=H_{(-\infty,0]}^{1/2}(\AAA^\sharp)$ and is called \emph{dual} APS boundary condition. In this case, we use notations
\[
\DD_{\rm APS}^+\;:=\;\DD_B^+,\qquad\DD_{\rm dAPS}^-\;:=\;\DD_{B^{\ad}}^-.
\]
\end{example}

A nice property of elliptic boundary value problems is the Fredholmness.

\begin{theorem}[\cite{BrShi17}*{\S5}]\label{T:Fred}
Let $\DD_B^+:\dom\DD_B^+\to L^2(M,E^-)$ be a strongly Callias-type operator with elliptic boundary condition. Then $\DD_B^+$ is a Fredholm operator.
\end{theorem}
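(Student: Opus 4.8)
The plan is to prove Fredholmness by establishing a coercive a priori estimate and combining it with a Rellich-type compactness argument. The two ingredients I would exploit are the ellipticity of $B$ (which forces boundary traces to lie in the top Sobolev space and hence yields regularity up to the boundary) and the defining coercivity of a strongly Callias-type operator (which controls sections near infinity). First I would establish \emph{regularity of the domain}: since $B\subset H_\AAA^{1/2}(\dm,E^+|_{\dm})$, every $u\in\dom\DD_B^+$ has trace $\cRR u\in H_\AAA^{1/2}$, and I would upgrade this to $u\in H^1_{\loc}(M,E^+)$ using interior elliptic regularity of $\DD^+$ together with elliptic regularity up to the boundary in the product collar $U$. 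In the collar, the product form \eqref{E:productDD} lets me expand in eigensections of $\AAA$ and compare with the model operator $c(\tau)(\p_t+\AAA)$; the discreteness of the spectrum of $\AAA$ and the $H_\AAA^{1/2}$-bound on $\cRR u$ then give a collar estimate $\|u\|_{H^1(U')}\le C(\|\DD^+u\|_{L^2(U)}+\|u\|_{L^2(U)}+\|\cRR u\|_{H_\AAA^{1/2}})$ on a slightly smaller collar $U'$.

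Next I would combine this with \emph{coercivity at infinity}. Using that $\DD^-\DD^+=D^2+\Psi^2+[D,\Psi]_+$ on $E^+$ (the $E^+$-block of the identity for $\DD^2$) and Definition \ref{D:saCallias}(2), an integration by parts shows that for sections supported in the interior outside a large compact set $K_R$ one has $(\DD^+u;\DD^+u)_{L^2}\ge R\,\|u\|_{L^2}^2$, so that the $L^2$-mass of $u$ away from a fixed compact set is controlled by $\|\DD^+u\|_{L^2}$. Patching the collar estimate, interior elliptic regularity, and this tail estimate together with a partition of unity (the commutators with the cutoffs are bundle maps supported in a fixed compact region, hence absorbed) yields the global a priori estimate
\[
\|u\|_{H^1(M)}\;\le\;C\big(\|\DD^+u\|_{L^2(M)}+\|u\|_{L^2(K)}\big),\qquad u\in\dom\DD_B^+,
\]
for a fixed compact $K\subset M$.

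Since $K$ may be chosen with smooth boundary, the restriction $H^1(M)\to L^2(K)$ is compact by Rellich; as the estimate bounds the $H^1(M)$-norm of $u$ by its graph norm, the composite map $\dom\DD_B^+\to L^2(K)$ is compact, and a standard functional-analytic argument then shows that $\DD_B^+$ has finite-dimensional kernel and closed range. Finally, by Definition \ref{D:bc} the adjoint of $\DD_B^+$ is $\DD_{B^{\rm ad}}^-$, and ellipticity of $B$ means precisely that $B^{\rm ad}\subset H_{\AAA^\sharp}^{1/2}(\dm,E^-|_{\dm})$, so $\DD_{B^{\rm ad}}^-$ is again a strongly Callias-type operator with an elliptic boundary condition. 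Running the same argument for $\DD_{B^{\rm ad}}^-$ shows its kernel is finite-dimensional, i.e.\ $\coker\DD_B^+$ is finite-dimensional, whence $\DD_B^+$ is Fredholm.

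The main obstacle is the first step: establishing the elliptic estimate \emph{uniformly up to the non-compact boundary}, where the compact-boundary pseudodifferential (Calder\'on projector) machinery is unavailable. Here one must lean on the product structure of the collar and on the coercivity built into both the potential $\Psi$ and the boundary operator $\AAA$ to tame behavior along the non-compact directions of $\dm$, so that the collar contribution can genuinely be controlled by $\|\cRR u\|_{H_\AAA^{1/2}}$ rather than by a global compactness that is not available on $\dm$.
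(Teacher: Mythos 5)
First, a remark on the target: this paper does not actually prove Theorem \ref{T:Fred} --- it imports it from \cite{BrShi17}*{\S5} --- so the comparison is with the argument of the cited source. Your overall architecture (boundary regularity from ellipticity of $B$, coercivity at infinity from $\Psi$, a Peetre-type a priori estimate, Rellich on a compact piece, and duality via $B^{\ad}$ to handle the cokernel) is the right one and is essentially the strategy used there. But there is a genuine gap at exactly the point you yourself flag as ``the main obstacle,'' and flagging it is not closing it. The coercivity you derive by integration by parts, $\|\DD^+u\|_{L^2}^2\ge R\|u\|_{L^2}^2$, is valid only for sections compactly supported in the \emph{interior} $M\setminus\p M$, where Green's formula produces no boundary term. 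In your patching argument, however, the tail piece $(1-\chi)u$ is supported in $M\setminus K$ with $K$ compact, and since $\p M$ is non-compact this support necessarily meets $\p M$; for such sections \eqref{E:genGreensfor} contributes a boundary term of no definite sign, so the tail estimate does not follow from what you proved. Relatedly, your collar estimate carries $\|u\|_{L^2(U)}$ (over the full non-compact collar) and $\|\cRR u\|_{H_\AAA^{1/2}}$ on its right-hand side; neither quantity is compact relative to the graph norm on $\dom\DD_B^+$ when $\p M$ is non-compact, so they cannot simply be ``patched'' into an estimate of the form $\|u\|\le C\big(\|\DD^+u\|_{L^2(M)}+\|u\|_{L^2(K)}\big)$ without circularity.

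The missing ingredient is the boundary operator itself: $\AAA$ is a (non-graded) strongly Callias-type operator on the non-compact manifold $\p M$ and therefore has \emph{discrete} spectrum with eigenvalues of finite multiplicity accumulating only at $\pm\infty$. In the product collar one writes $\DD^+=c(\tau)(\p_t+\AAA)$, expands $u=\sum_j a_j(t)\bfu_j$, and computes $\|(\p_t+\AAA)u\|^2=\|\p_t u\|^2+\|\AAA u\|^2+\int\p_t(u;\AAA u)_{L^2(\p M)}\,dt$; the last term integrates to $-(\cRR u;\AAA\cRR u)_{L^2(\p M)}$ (plus a term killed by the cutoff), whose sign is favorable for the APS condition and is controlled up to a finite-dimensional error for a general elliptic $B$, while $\|\AAA u\|^2\ge\lambda^2\|u\|^2$ holds off the span of the finitely many small eigenvalues. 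It is this discreteness --- not Rellich compactness on $\p M$, which is unavailable --- that supplies the coercive estimate near the non-compact ends of the boundary and lets the Peetre argument close. With that lemma in place the rest of your proof (interior coercivity from Definition \ref{D:saCallias}(2), Rellich on a compact neighborhood of $K$, and the adjoint argument for the cokernel) goes through as you describe.
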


In this case, the integer 
\begin{equation}\label{E:defofind}
		\ind\DD_B^+\;:=\;\dim\ker\DD_B^+-\dim\ker\DD_{B^{\rm ad}}^-\;\in\;\ZZ
\end{equation}
is called the {\em index of the boundary value problem} $\DD_B^+$.

\subsection{Unique continuation property}\label{SS:UCP}

We state a well-known property of Dirac-type operators, called the (weak) \emph{unique continuation property}, as follows

\begin{theorem}\label{T:UCP}
Let $P$ be a Dirac-type operator over a (connected) smooth manifold $M$. Then any smooth solution $s$ of $Ps=0$ which vanishes on an open subset of $M$ also vanishes on the whole manifold $M$.
\end{theorem}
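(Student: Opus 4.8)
The plan is to reduce the first-order equation to a second-order one and then invoke the classical Aronszajn--Cordes unique continuation theorem. First I would use that $P$ is of Dirac type: its principal symbol is (up to a constant) Clifford multiplication, so $\sigma_P(\xi)^2=-|\xi|^2\id$, and therefore $P^2$ is a \emph{generalized Laplacian}, i.e. its principal symbol is $|\xi|^2\,\id$. In local coordinates this means $P^2=-g^{ij}\partial_i\partial_j\,\id+(\text{first-order terms})$. Since any smooth solution of $Ps=0$ also satisfies $P^2s=0$, moving the lower-order terms to the right-hand side produces, on any compact set, a pointwise differential inequality of the form $|\Delta s|\le C(|s|+|\nabla s|)$, where $\Delta$ denotes the scalar Laplace--Beltrami operator acting componentwise.

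Next I would apply the strong unique continuation theorem of Aronszajn (with the extension by Cordes to Lipschitz principal coefficients) for second-order elliptic operators whose principal part is scalar. This is exactly where the Dirac structure is essential: although $s$ is a section of a vector bundle, the principal symbol of $P^2$ is a \emph{scalar} multiple of the identity, so all off-diagonal coupling lives in the lower-order terms, and the Carleman-estimate argument behind Aronszajn's theorem applies to the system through the differential inequality above. The output of this step is the strong UCP: if $s$ vanishes to infinite order at a point $x_0$, then $s\equiv0$ on a neighborhood of $x_0$.

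Finally I would run the standard connectedness argument. Let $\Omega\subset M$ be the interior of the zero set of $s$; by hypothesis $\Omega$ is nonempty, and it is open by definition. To see that $\Omega$ is also closed, take $x\in\overline{\Omega}$: since $s$ vanishes identically on points accumulating at $x$, every derivative of $s$ vanishes at $x$, so $s$ vanishes to infinite order there; the strong UCP just established then gives $s\equiv0$ near $x$, i.e. $x\in\Omega$. Thus $\Omega$ is open and closed in the connected manifold $M$, forcing $\Omega=M$, which is the claim.

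The hard part is, strictly speaking, Aronszajn's theorem itself, which rests on delicate Carleman estimates; but as it is a classical result it may be quoted, so the only genuine work is the reduction step---checking that $P^2$ has scalar principal symbol, so that the vector-valued equation decouples at top order and the scalar unique continuation theory applies. One may alternatively cite the unique continuation results for Dirac-type operators established in \cite{BW93}.
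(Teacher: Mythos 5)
Your argument is correct, but it is worth noting that the paper does not actually prove Theorem \ref{T:UCP} at all: it quotes the result from \cite{BW93}*{\S 8} and \cite{Booss00}, where the proof is a Carleman estimate carried out \emph{directly for the first-order operator}, exploiting the fact that near any hypersurface a Dirac-type operator can be written as $c(\tau)(\partial_t+A_t+\text{lower order})$ with $A_t$ self-adjoint elliptic on the hypersurface; the symmetry of the principal symbol is what makes that estimate work. Your route is the other classical one: square the operator, observe that $\sigma_{P^2}(\xi)=|\xi|^2\id$ so that $P^2$ is a generalized Laplacian whose off-diagonal coupling is entirely of lower order, deduce the differential inequality $|\Delta s|\le C(|s|+|\nabla s|)$ on compact sets, and invoke Aronszajn--Cordes strong unique continuation for second-order operators with scalar principal part, finishing with the open-closed argument (your justification that $s$ vanishes to infinite order at boundary points of the interior of the zero set is correct, since all derivatives of $s$ vanish on that open set and are continuous). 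Each approach quotes a deep Carleman estimate as a black box; yours buys the stronger conclusion (strong UCP from infinite-order vanishing at a single point, rather than merely weak UCP from vanishing on an open set) and reduces everything to scalar second-order theory, while the first-order argument of \cite{BW93} and \cite{Booss00} stays closer to the Dirac structure, avoids the regularity bookkeeping that squaring introduces, and is the proof the author has in mind when asserting that the property ``only depends on the symmetry of the principal symbol.'' Since the manifold and coefficients here are smooth, the regularity hypotheses of Aronszajn's theorem are satisfied and your proof is complete as a reduction to that cited result.
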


Essentially, this property only depends on the symmetry of the principal symbol of Dirac-type operators and a nice proof is given in \cite{BW93}*{\S8}, \cite{Booss00}. In particular, the strongly Callias-type operators introduced earlier satisfy this property.

\begin{corollary}\label{C:UCP}
Let $\DD^+$ be a strongly Callias-type operator. Then the space of interior solutions
\[
\ker_0\DD_{\max}^+\;:=\;\{u\in\dom\DD_{\max}^+\;:\;\DD_{\max}^+u=0\mbox{ and }\cRR(u)=0\}
\]
contains only 0-sections. The same conclusion is true for $\DD^-$.
\end{corollary}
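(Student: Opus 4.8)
The plan is to reduce the corollary to the unique continuation property stated in Theorem \ref{T:UCP}. The key observation is that an element $u\in\ker_0\DD_{\max}^+$ is a section in the maximal domain satisfying $\DD_{\max}^+u=0$ together with the vanishing boundary trace $\cRR(u)=0$. First I would invoke an interior elliptic regularity result for the strongly Callias-type operator $\DD^+$: since $\DD^+$ is a first-order elliptic operator and $\DD_{\max}^+u=0$ in the distributional sense, $u$ is smooth in the interior $M\setminus\pM$. Thus $u$ is a genuine smooth solution of $\DD^+u=0$ away from the boundary.

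The next step is to pass from the manifold with boundary to a boundaryless setting so that Theorem \ref{T:UCP} applies. Using the product structure \eqref{E:productDD} near the boundary, I would extend $M$ across $\pM$ by attaching a collar $(-\varepsilon,0]\times\pM$, obtaining an open manifold $\tilM$ without boundary (at least locally near $\pM$) on which $\DD^+$ extends as a Dirac-type operator. The condition $\cRR(u)=0$ means precisely that the boundary restriction of $u$ vanishes. Combined with $\DD_{\max}^+u=0$, I would extend $u$ by zero across the collar into $\tilM$ and argue that the extension $\tilu$ remains a (weak, hence smooth) solution of the extended Dirac-type operator. The mechanism here is the generalized Green's formula \eqref{E:genGreensfor}: for any test section $v$ supported near the boundary, the boundary term $\big(c(\tau)\cRR u;\cRR v\big)_{L^2(\pM)}$ vanishes because $\cRR u=0$, so $\tilu$ weakly satisfies the equation across $\pM$ with no distributional contribution concentrated on the boundary.

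Once $\tilu$ is known to be a smooth solution of a Dirac-type operator on the connected manifold $\tilM$ and to vanish on the open collar set $(-\varepsilon,0)\times\pM$, Theorem \ref{T:UCP} immediately forces $\tilu\equiv 0$ on $\tilM$, and in particular $u\equiv 0$ on $M$. This shows $\ker_0\DD_{\max}^+=\{0\}$. The statement for $\DD^-$ follows by the identical argument, replacing $\AAA$ with $\AAA^\sharp$ and using the corresponding assertions of Theorem \ref{T:maxdom}.

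The main obstacle I expect is the zero-extension step: justifying rigorously that extending $u$ by $0$ across the boundary yields a distributional (and hence, by elliptic regularity, smooth) solution on $\tilM$, with no singular contribution along $\pM$. This requires carefully checking that the vanishing of $\cRR(u)$ in the trace space $\cHH(\AAA)$ is enough to kill the boundary jump term, which is exactly what the generalized Green's formula \eqref{E:genGreensfor} provides. A technical point to handle is that $\cRR(u)=0$ lives in the hybrid space $\cHH(\AAA)$ rather than in a full $H^{1/2}$-space, so I would need to confirm that this weaker notion of vanishing trace still suffices to guarantee the weak extension has no boundary distribution; the product form \eqref{E:productDD} of $\DD^+$ near $\pM$, together with the density of smooth sections in $\dom\DD_{\max}^+$, should make this routine.
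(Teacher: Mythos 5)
Your proposal is correct and takes essentially the same route as the paper's proof: the paper extends $u$ by zero to the (invertible) double $\tilde M$ of $M$ rather than to a collar, checks via the generalized Green's formula \eqref{E:genGreensfor} that the zero-extension is a weak, hence smooth, solution, and then applies Theorem \ref{T:UCP}. The technical worry you flag about the trace vanishing only in the hybrid space $\cHH(\AAA)$ is resolved exactly as you anticipate, since \eqref{E:genGreensfor} is stated for all of $\dom\DD_{\max}^+$ and the boundary pairing vanishes when $\cRR u=0$.
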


\begin{proof}
Proceeding as in \cite{BW93}*{\S9}, one can construct an invertible double $\tilde{\DD^+}$ of $\DD^+$ on $\tilde{M}$, the double of $M$, such that $
\tilde{\DD^+}|_M=\DD^+$. Let $u$ be an element of $\ker_0\DD_{\max}^+$. We extend it by zero to get a section $\tilde{u}$ on $\tilde{M}$. For any compactly supported smooth section $\tilde{v}$ on $\tilde{M}$, by Green's formula,
\[
\begin{aligned}
\big(\tilde{u};(\tilde{\DD^+})^*\tilde{v}\big)_{L^2(M)}&=\int_M\big\langle u;(\tilde{\DD^+})^*\tilde{v}|_M\big\rangle \\
&=\int_M\big\langle\DD^+u;\tilde{v}|_M\big\rangle+\int_{\p M}\langle c(\tau)u|_{\p M};\tilde{v}|_{\p M}\rangle=0.
\end{aligned}
\]
Thus $\tilde{u}$ is a weak solution of $\tilde{\DD^+}s=0$. By elliptic regularity, $\tilde{u}$ is smooth. Since $\tilde{u}$ vanishes on $\tilde{M}\setminus M$, applying Theorem \ref{T:UCP} to $\tilde{\DD^+}$ yields that $\tilde{u}\equiv0$ on $\tilde{M}$. Therefore $u$ is a 0-section.
\end{proof}

It follows from the corollary that

\begin{corollary}\label{C:UCP-2}
The maps $\cRR|_{\ker\DD_{\max}^\pm}:\ker\DD_{\max}^\pm\to\cHH(\AAA)$ (or $\cHH(\AAA^\sharp)$) are injective.
\end{corollary}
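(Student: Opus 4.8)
The plan is to reduce the statement directly to Corollary \ref{C:UCP}, exploiting that injectivity of a linear map is equivalent to the triviality of its kernel. I will treat the $\DD_{\max}^+$ case; the $\DD_{\max}^-$ case is identical with $\AAA$ replaced by $\AAA^\sharp$. The map in question, $\cRR|_{\ker\DD_{\max}^+}$, is the restriction of the bounded linear trace map $\cRR:\dom\DD_{\max}^+\to\cHH(\AAA)$ from Theorem \ref{T:maxdom} to the closed subspace $\ker\DD_{\max}^+\subset\dom\DD_{\max}^+$. Since this restriction is again linear, it suffices to show that any $u\in\ker\DD_{\max}^+$ with $\cRR u=0$ must vanish.

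First I would unwind the definitions. An element $u\in\ker\DD_{\max}^+$ is by definition a section $u\in\dom\DD_{\max}^+$ satisfying $\DD_{\max}^+u=0$. If in addition $\cRR u=0$, then $u$ satisfies both conditions appearing in the definition of the space of interior solutions, namely $\DD_{\max}^+u=0$ and $\cRR(u)=0$. Hence $u\in\ker_0\DD_{\max}^+$.

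The decisive step is then simply to invoke Corollary \ref{C:UCP}, which asserts that $\ker_0\DD_{\max}^+$ contains only the zero section. Therefore $u=0$, so the only element of $\ker\DD_{\max}^+$ in the kernel of $\cRR$ is $0$, and $\cRR|_{\ker\DD_{\max}^+}$ is injective.

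There is no genuine obstacle here, as all the analytic content — the unique continuation, the zero-extension to the invertible double, and the elliptic regularity argument — has already been absorbed into Corollary \ref{C:UCP}. The only point requiring a moment's care is the bookkeeping that the kernel of the \emph{restricted} trace map coincides exactly with the space $\ker_0\DD_{\max}^+$ of interior solutions; once this identification is made explicit, the corollary follows immediately. The parallel statement for $\DD_{\max}^-$ uses the second half of Corollary \ref{C:UCP} together with the trace map onto $\cHH(\AAA^\sharp)$, with no additional argument needed.
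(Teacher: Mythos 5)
Your proposal is correct and matches the paper's own reasoning: the paper presents Corollary \ref{C:UCP-2} as an immediate consequence of Corollary \ref{C:UCP}, precisely because the kernel of $\cRR|_{\ker\DD_{\max}^\pm}$ is exactly the space $\ker_0\DD_{\max}^\pm$ of interior solutions, which that corollary shows is trivial. Your explicit bookkeeping of this identification is the only content needed, and it is exactly what the paper leaves implicit.
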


\begin{lemma}\label{L:UCP}
$\range\DD_{\max}^+=L^2(M,E^-).$
\end{lemma}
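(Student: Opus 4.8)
The plan is to read off surjectivity from the description of the maximal domain together with the invertible double used in Corollary~\ref{C:UCP}. Recall that $\DD_{\max}^+=\big((\DD^-)_{cc}\big)^\ad$ means concretely that $u\in\dom\DD_{\max}^+$ exactly when $u\in L^2(M,E^+)$ and $\DD^+u\in L^2(M,E^-)$ holds in the distributional sense on the interior of $M$, with $\DD_{\max}^+u=\DD^+u$; no boundary condition is imposed. Hence the claim $\range\DD_{\max}^+=L^2(M,E^-)$ is equivalent to: for every $f\in L^2(M,E^-)$ there is an $L^2$ section $u$ with $\DD^+u=f$ on the interior. I would construct such a $u$ by solving the equation on the double and restricting.

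First I would reuse the invertible double from the proof of Corollary~\ref{C:UCP}: a strongly Callias-type operator $\tilde{\DD^+}$ on the complete boundaryless double $\tilM$ with $\tilde{\DD^+}|_M=\DD^+$ and with $\tilde{\DD^+}\colon L^2(\tilM,\tilde E^+)\to L^2(\tilM,\tilde E^-)$ invertible, its formal adjoint $\tilde{\DD^-}$ restricting to $\DD^-$ on $M$. Given $f$, extend it by zero to $\tilde f\in L^2(\tilM,\tilde E^-)$, put $\tilde u:=(\tilde{\DD^+})^{-1}\tilde f\in\dom\tilde{\DD^+}$ so that $\tilde{\DD^+}\tilde u=\tilde f$, and set $u:=\tilde u|_M$. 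For any $\phi\in C_\cc^\infty(M,E^-)$, extending $\phi$ by zero and using that $\tilM$ has no boundary gives $(u;\DD^-\phi)_{L^2(M)}=(\tilde u;\tilde{\DD^-}\phi)_{L^2(\tilM)}=(\tilde{\DD^+}\tilde u;\phi)_{L^2(\tilM)}=(f;\phi)_{L^2(M)}$, so $\DD^+u=f$ distributionally on $M$. As $u\in L^2$ and $f\in L^2$, this places $u\in\dom\DD_{\max}^+$ with $\DD_{\max}^+u=f$, which yields surjectivity.

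The genuinely substantive input is the existence of the invertible double, and this is the step I expect to be the main obstacle; it is exactly why the lemma is presented as a consequence of the unique continuation discussion, the same construction having already been invoked for Corollary~\ref{C:UCP}. Its ingredients are essential self-adjointness of $\tilde{\DD}$ on the complete manifold $\tilM$, the spectral gap at $0$ away from a compact set furnished by Definition~\ref{D:saCallias}(2), and a Booss-Bavnbek--Wojciechowski-type modification near the gluing hypersurface to remove the residual finite-dimensional kernel. As an independent check that the range is at least dense, I note the functional-analytic route: $(\DD_{\max}^+)^\ad=\DD_{\min}^-$, and any $w$ orthogonal to $\range\DD_{\max}^+$ lies in $\ker\DD_{\min}^-$; since sections in $\dom\DD_{\min}^-$ are graph-norm limits of sections in $C_\cc^\infty$, their traces vanish by the continuity of $\cRR$ in Theorem~\ref{T:maxdom}, so $w\in\ker_0\DD_{\max}^-=\{0\}$ by Corollary~\ref{C:UCP}. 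The doubling argument is what upgrades this density to full surjectivity by producing explicit preimages.
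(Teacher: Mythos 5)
Your argument is correct, but it is not the paper's argument. The paper proves the lemma in two short steps using only tools already on the table: (a) $\range\DD_{\max}^+\supset\range\DD_{\rm APS}^+$, and the latter is closed with finite codimension by the Fredholmness of Theorem~\ref{T:Fred}, so $\range\DD_{\max}^+$ is closed; (b) hence $\range\DD_{\max}^+=(\ker\DD_{\min}^-)^\perp=\{0\}^\perp=L^2(M,E^-)$, where $\ker\DD_{\min}^-=\{0\}$ is exactly the unique-continuation input from Corollary~\ref{C:UCP}. Your ``independent check'' at the end is in substance the paper's step (b); what you treat as merely a density statement the paper upgrades to surjectivity via the closedness in (a), not via explicit preimages. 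Your main route instead solves $\tilde{\DD^+}\tilde u=\tilde f$ on the double and restricts, which does produce a preimage and is a valid alternative --- the verification that $u=\tilde u|_M$ lies in $\dom\DD_{\max}^+$ with $\DD_{\max}^+u=f$ is correctly carried out against the definition $\DD_{\max}^+=\big((\DD^-)_{\cc}\big)^{\ad}$. The trade-off is that your route leans on the full invertibility of the doubled operator, a construction the paper only sketches (in the proof of Corollary~\ref{C:UCP} it uses the double solely for unique continuation, never its invertibility), and which in the non-compact setting requires knowing that the doubled strongly Callias-type operator has discrete spectrum near zero plus a perturbation to kill the residual kernel; the paper's route avoids all of this by quoting the already-established Fredholmness of $\DD_{\rm APS}^+$. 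So your proof is sound but heavier than necessary given the paper's toolkit, while it has the mild advantage of being constructive and independent of the APS boundary value problem.
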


\begin{proof}
Since $\range\DD_{\max}^+\supset\range\DD_{\rm APS}^+$ and the latter admits a closed finite-dimensional complementary subspace in $L^2(M,E^-)$ (by the Fredholmness of $\DD_{\rm APS}^+$), one gets that $\range\DD_{\max}^+$ is closed in $L^2(M,E^-)$. Therefore
\[
\range\DD_{\max}^+\;=\;(\ker\DD_{\min}^-)^\perp\;=\;\{0\}^\perp\;=\;L^2(M,E^-).
\]
\end{proof}

\section{Maximal Cauchy data spaces and index formulas}\label{S:maxCauchy}

\begin{definition}\label{D:maxCauchy}
Let $\DD^+$ be a strongly Callias-type operator on $M$. We call 
\[
\CCC_{\max}^+\;:=\;\cRR(\ker\DD_{\max}^+)\;\subset\;\cHH(\AAA)
\]
the \emph{Cauchy data space} of the maximal extension $\DD_{\max}^+$. Similarly,
\[
\CCC_{\max}^-\;:=\;\cRR(\ker\DD_{\max}^-)\;\subset\cHH(\AAA^\sharp)
\]
is called the \emph{Cauchy data space} of the maximal extension $\DD_{\max}^-$.

Note that $\CCC_{\max}^+$ (resp. $\CCC_{\max}^-$) is a closed subspace of $\cHH(\AAA)$ (resp. $\cHH(\AAA^\sharp)$).
\end{definition}

\subsection{Fredholm pair}\label{SS:Fredpair}

We recall the concept of Fredholm pair (cf. \cite{Kato95}*{\S\upper{\romannumeral4}.4.1}).

\begin{definition}\label{D:Fredpair}
Let $Z$ be a Hilbert space. A pair $(X,Y)$ of closed subspaces of $Z$ is called a \emph{Fredholm pair} if
\begin{enumerate}[label=(\roman*)]
\item $\dim(X\cap Y)<\infty$;
\item $X+Y$ is a closed subspace of $Z$;
\item $\codim(X+Y):=\dim Z/(X+Y)<\infty$.
\end{enumerate}
The \emph{index} of a Fredholm pair $(X,Y)$ is defined to be
\[
\ind(X,Y)\;:=\;\dim(X\cap Y)-\codim(X+Y).
\]
\end{definition}

\begin{proposition}\label{P:Fredpair}
$(H_{(-\infty,0)}^{1/2}(\AAA),\CCC_{\max}^+)$ and $(H_{(-\infty,0]}^{1/2}(\AAA^\sharp),\CCC_{\max}^-)$ are Fredholm pairs in $\cHH(\AAA)$ and $\cHH(\AAA^\sharp)$, respectively. Moreover,
\begin{equation}\label{E:indeq}
\ind(H_{(-\infty,0)}^{1/2}(\AAA),\CCC_{\max}^+)\;=\;\ind\DD_{\rm APS}^+\;=\;-\ind(H_{(-\infty,0]}^{1/2}(\AAA^\sharp),\CCC_{\max}^-).
\end{equation}
\end{proposition}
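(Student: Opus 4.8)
The plan is to transfer the three defining conditions of a Fredholm pair, together with the index, to the already-known Fredholm properties of $\DD_{\rm APS}^+$ (Theorem~\ref{T:Fred}). Throughout write $X:=H_{(-\infty,0)}^{1/2}(\AAA)$ and $Y:=\CCC_{\max}^+$ inside $Z:=\cHH(\AAA)$. First I would identify the intersection. An element of $X\cap Y$ is of the form $\cRR u$ for some $u\in\ker\DD_{\max}^+$ that in addition satisfies $\cRR u\in H_{(-\infty,0)}^{1/2}(\AAA)$; by Example~\ref{Ex:gAPS} this is precisely the condition $u\in\ker\DD_{\rm APS}^+$. Since $\cRR$ restricted to $\ker\DD_{\max}^+$ is injective (Corollary~\ref{C:UCP-2}), the trace map restricts to a linear isomorphism of $\ker\DD_{\rm APS}^+$ onto $X\cap Y$, whence $\dim(X\cap Y)=\dim\ker\DD_{\rm APS}^+<\infty$ by Fredholmness. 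This settles condition (i).

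For conditions (ii) and (iii) I would produce an algebraic isomorphism $\coker\DD_{\rm APS}^+\cong Z/(X+Y)$. The key inputs are that $\DD_{\max}^+$ is surjective (Lemma~\ref{L:UCP}) and that $\cRR:\dom\DD_{\max}^+\to\cHH(\AAA)$ is surjective (Theorem~\ref{T:maxdom}). Given $f\in L^2(M,E^-)$, choose any $w$ with $\DD_{\max}^+w=f$ and set $\Phi(f):=[\cRR w]\in Z/(X+Y)$. This is well defined because two choices of $w$ differ by an element of $\ker\DD_{\max}^+$, whose trace lies in $\CCC_{\max}^+=Y\subset X+Y$. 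Unwinding the definition of $\dom\DD_{\rm APS}^+$, one checks that $\Phi(f)=0$ holds exactly when $f\in\range\DD_{\rm APS}^+$, so $\ker\Phi=\range\DD_{\rm APS}^+$; and $\Phi$ is surjective because $\cRR$ is. Hence $\Phi$ descends to an isomorphism $\coker\DD_{\rm APS}^+=L^2(M,E^-)/\range\DD_{\rm APS}^+\cong Z/(X+Y)$, giving $\codim(X+Y)=\dim\coker\DD_{\rm APS}^+<\infty$, which is condition (iii). Closedness of $X+Y$ (condition (ii)) then follows automatically: the bounded map $T:X\oplus Y\to Z$, $(x,y)\mapsto x-y$, has $\range T=X+Y$ of finite codimension, and a bounded operator between Banach spaces whose range has finite algebraic codimension has closed range (apply the open mapping theorem to $T$ together with a finite-dimensional complement of its range). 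Thus $(X,Y)$ is a Fredholm pair.

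Finally I would collect the two identifications. Since $\DD_{\rm dAPS}^-$ is the adjoint of $\DD_{\rm APS}^+$ (Definition~\ref{D:bc} and Example~\ref{Ex:gAPS}) and the range of $\DD_{\rm APS}^+$ is closed, $\coker\DD_{\rm APS}^+\cong\ker\DD_{\rm dAPS}^-$, so
\[
\ind(X,Y)=\dim(X\cap Y)-\codim(X+Y)=\dim\ker\DD_{\rm APS}^+-\dim\ker\DD_{\rm dAPS}^-=\ind\DD_{\rm APS}^+
\]
by \eqref{E:defofind}. Running the identical argument for $\DD^-$ with the dual APS condition $H_{(-\infty,0]}^{1/2}(\AAA^\sharp)$ — for which the analogues of Corollary~\ref{C:UCP-2}, Lemma~\ref{L:UCP} and Theorem~\ref{T:Fred} hold by the symmetry between $\DD^+$ and $\DD^-$ — shows that $(H_{(-\infty,0]}^{1/2}(\AAA^\sharp),\CCC_{\max}^-)$ is a Fredholm pair with $\ind(H_{(-\infty,0]}^{1/2}(\AAA^\sharp),\CCC_{\max}^-)=\ind\DD_{\rm dAPS}^-$. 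As $\DD_{\rm dAPS}^-$ is the adjoint of the Fredholm operator $\DD_{\rm APS}^+$, we have $\ind\DD_{\rm dAPS}^-=-\ind\DD_{\rm APS}^+$, which combines with the previous display to give \eqref{E:indeq}.

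I expect the crux — and the step most in need of care — to be the cokernel identification $\coker\DD_{\rm APS}^+\cong Z/(X+Y)$, where one must simultaneously use surjectivity of the operator (Lemma~\ref{L:UCP}) and surjectivity of the trace (Theorem~\ref{T:maxdom}), and then argue that finite algebraic codimension of $X+Y$ already forces closedness. By contrast, the intersection computation and the final index bookkeeping are comparatively routine once the unique-continuation corollaries are available.
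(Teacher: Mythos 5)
Your proposal is correct, and its skeleton is the same as the paper's: identify $H_{(-\infty,0)}^{1/2}(\AAA)\cap\CCC_{\max}^+$ with $\cRR(\ker\DD_{\rm APS}^+)$ and hence, via the injectivity of $\cRR$ on $\ker\DD_{\max}^+$ (Corollary \ref{C:UCP-2}), with $\ker\DD_{\rm APS}^+$; identify $\cHH(\AAA)/(H_{(-\infty,0)}^{1/2}(\AAA)+\CCC_{\max}^+)$ with $\coker\DD_{\rm APS}^+$ using the surjectivity of $\DD_{\max}^+$ (Lemma \ref{L:UCP}) and of $\cRR$ (Theorem \ref{T:maxdom}); and reduce the second pair to the first through $\ind\DD_{\rm dAPS}^-=-\ind\DD_{\rm APS}^+$. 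Your map $\Phi$ is the same computation as the paper's identification of $\ker(\pi\circ\DD_{\max}^+)$ with $\dom\DD_{\rm APS}^++\ker\DD_{\max}^+$ followed by passing to quotients, just organized without the intermediate quotient of domains. The one step you handle genuinely differently is condition (ii): the paper proves closedness of the sum directly, by observing that $\dom\DD_{\rm APS}^++\ker\DD_{\max}^+$ is the preimage of the closed subspace $\range\DD_{\rm APS}^+$ under the continuous map $\DD_{\max}^+$ and then transporting closedness to the boundary with the extension operator $\EE$ satisfying $\cRR\circ\EE=\id$; you instead establish condition (iii) first and deduce (ii) from the standard fact that a bounded operator between Banach spaces whose range has finite algebraic codimension has closed range. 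That deduction is sound (the open-mapping-theorem argument you indicate, applied to $(X\oplus Y)/\ker T\times W\to Z$ with $W$ a finite-dimensional algebraic complement, is the usual proof), and it has the mild advantage of avoiding the extension map $\EE$ altogether, at the cost of making (ii) logically dependent on (iii) rather than independent of it.
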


The idea of the proof is from \cite{BF98}*{Proposition 3.5}.

\begin{proof}
Since $\ind\DD_{\rm APS}^+=-\ind\DD_{\rm dAPS}^-$ by \eqref{E:defofind}, we may only prove the conclusion for the first pair.

Recall that by Example \ref{Ex:gAPS}, $H_{(-\infty,0)}^{1/2}(\AAA)=\cRR(\dom\DD_{\rm APS}^+)$ and by Definition \ref{D:maxCauchy}$, \CCC_{\max}^+=\cRR(\ker\DD_{\max}^+)$. We first show that
\begin{equation}\label{E:intequ}
\cRR(\dom\DD_{\rm APS}^+\cap\ker\DD_{\max}^+)\;=\;\cRR(\dom\DD_{\rm APS}^+)\cap\cRR(\ker\DD_{\max}^+).
\end{equation}
It is clear that the right hand side includes the left hand side. To show the other direction, let $\bfu\in\cRR(\dom\DD_{\rm APS}^+)\cap\cRR(\ker\DD_{\max}^+)$. Then $\bfu=\cRR(u_1)=\cRR(u_2)$ for some $u_1\in\dom\DD_{\rm APS}^+$, $u_2\in\ker\DD_{\max}^+$. So $u_1-u_2\in\dom\DD_{\max}^+$ and $\cRR(u_1-u_2)=0$, which implies that $u_1-u_2\in\dom\DD_{\rm APS}^+$. Hence $u_2\in\dom\DD_{\rm APS}^+$ and it follows that $u_2\in\dom\DD_{\rm APS}^+\cap\ker\DD_{\max}^+$. Therefore $\bfu\in\cRR(\dom\DD_{\rm APS}^+\cap\ker\DD_{\max}^+)$. \eqref{E:intequ} is verified.

Since $\DD_{\rm APS}^+$ is a Fredholm operator, it follows from Corollary \ref{C:UCP-2} that
\begin{multline*}
\infty>\dim\ker\DD_{\rm APS}^+=\dim(\dom\DD_{\rm APS}^+\cap\ker\DD_{\max}^+) \\
=\dim\cRR(\dom\DD_{\rm APS}^+\cap\ker\DD_{\max}^+)=\dim(H_{(-\infty,0)}^{1/2}(\AAA)\cap\CCC_{\max}^+).
\end{multline*}
(\romannumeral1) of Definition \ref{D:Fredpair} is proved.

Note that the preimage of $\range\DD_{\rm APS}^+$ under $\DD^+$ is $\dom\DD_{\rm APS}^++\ker\DD_{\max}^+$. Since $\DD^+:\dom\DD_{\max}^+\to L^2(M,E^-)$ is continuous,
\[
\begin{aligned}
\DD_{\rm APS}^+\mbox{ Fredholm }&\;\Rightarrow\;\range\DD_{\rm APS}^+\mbox{ is closed in }L^2(M,E^-) \\
&\;\Rightarrow\;\dom\DD_{\rm APS}^++\ker\DD_{\max}^+\mbox{ is closed in }\dom\DD_{\max}^+.
\end{aligned}
\]
Recall that in \cite{BrShi17}, we defined a continuous extending map $\EE:\cHH(\AAA)\to\dom\DD_{\max}^+$ satisfying $\cRR\circ\EE=\id$. If $\{\bfu_j\}$ is a sequence in $\cRR(\dom\DD_{\rm APS}^++\ker\DD_{\max}^+)=H_{(-\infty,0)}^{1/2}(\AAA)+\CCC_{\max}^+\subset\cHH(\AAA)$ that is convergent to some $\bfu\in\cHH(\AAA)$, then $\{\EE\bfu_j\}$ converges to $\EE\bfu$ in $\dom\DD_{\max}^+$. Like what we argued in proving \eqref{E:intequ}, using the fact that $\dom\DD_{\rm APS}^++\ker\DD_{\max}^+$ is a subspace of $\dom\DD_{\max}^+$, one can show that $\EE\bfu_j\in\dom\DD_{\rm APS}^++\ker\DD_{\max}^+$. By the above closedness, $\EE\bfu$ also lies in $\dom\DD_{\rm APS}^++\ker\DD_{\max}^+$. Therefore $\bfu=\cRR(\EE\bfu)\in H_{(-\infty,0)}^{1/2}(\AAA)+\CCC_{\max}^+$. (\romannumeral2) of Definition \ref{D:Fredpair} is proved.

To prove Definition \ref{D:Fredpair}.(\romannumeral3) and equation \eqref{E:indeq}, note that $\cRR$ induces a bijection between $\dom\DD_{\max}^+/(\dom\DD_{\rm APS}^++\ker\DD_{\max}^+)$ and $\cHH(\AAA)/(H_{(-\infty,0)}^{1/2}(\AAA)+\CCC_{\max}^+)$. Let $\pi:L^2(M,E^-)\twoheadrightarrow(\range\DD_{\rm APS}^+)^\perp$ be the orthogonal projection. By Lemma \ref{L:UCP}, $\DD_{\max}^+:\dom\DD_{\max}^+\to L^2(M,E^-)$ is surjective, so
\[
\ker(\pi\circ\DD_{\max}^+)\;=\;\dom\DD_{\rm APS}^++\ker\DD_{\max}^+.
\]
Then
\[
\begin{aligned}
\dom\DD_{\max}^+/(\dom\DD_{\rm APS}^++\ker\DD_{\max}^+)\;&\cong\;(\range\DD_{\rm APS}^+)^\perp \\
&=\;L^2(M,E^-)/\range\DD_{\rm APS}^+.
\end{aligned}
\]
Hence
\begin{equation}\label{E:leftineq}
\begin{aligned}
\codim(H_{(-\infty,0)}^{1/2}(\AAA)+\CCC_{\max}^+)&\;=\;\dim\cHH(\AAA)/(H_{(-\infty,0)}^{1/2}(\AAA)+\CCC_{\max}^+) \\
&\;=\;\dim\dom\DD_{\max}^+/(\dom\DD_{\rm APS}^++\ker\DD_{\max}^+) \\
&\;=\;\dim L^2(M,E^-)/\range\DD_{\rm APS}^+ \\
&\;=\;\dim\coker\DD_{\rm APS}^+\;<\;\infty.
\end{aligned}
\end{equation}
Therefore
\[
\ind(H_{(-\infty,0)}^{1/2}(\AAA),\CCC_{\max}^+)\;=\;\ind\DD_{\rm APS}^+.
\]
\end{proof}

\subsection{Fredholm pair of projections}\label{SS:Fredproj}

A notion that is closely related to Fredholm pair is the \emph{Fredholm pair of projections} considered in \cite{ASS}.

\begin{definition}\label{D:Fredproj}
Let $Z$ be a Hilbert space and $(X,Y)$ be a pair of closed subspaces of $Z$. Denote the orthogonal projections from $Z$ onto $X,Y$ by $P_X,P_Y$, respectively. $(P_X,P_Y)$ is called a \emph{Fredholm pair of projections} if $P_XP_Y:\range P_Y\to\range P_X$ is a Fredholm operator. Its index is defined as $\ind(P_X,P_Y):=\ind P_XP_Y$.
\end{definition}

We formulate the following standard result about equivalent definitions of Fredholm pairs and Fredholm pair of projections (cf. \cite{Kato95}*{\S\upper{\romannumeral4}.4.2}, \cite{BW93}*{\S24}).

\begin{proposition}\label{P:Fredequiv}
Let $Z$ be a Hilbert space and $X,Y,P_X,P_Y$ be as above. Then the following are equivalent:
\begin{enumerate}
\item $(X,Y)$ is a Fredholm pair;
\item $(X^0,Y^0)$ is a Fredholm pair, where $X^0,Y^0\subset Z^*$ are the annihilators of $X,Y$, respectively;
\item $(X^\perp,Y^\perp)$ is a Fredholm pair, where $X^\perp,Y^\perp\subset Z$ are the orthogonal complements of $X,Y$, respectively;
\item $(P_{X^\perp},P_Y)$ is a Fredholm pair of projections.
\end{enumerate}
In this case, one has
\[
\begin{aligned}
\dim(X\cap Y)\;=\;\codim(X^0+Y^0)\;=\;\codim(X^\perp+Y^\perp)\;=\;\dim\ker P_{X^\perp}P_Y; \\
\codim(X+Y)\;=\;\dim(X^0\cap Y^0)\;=\;\dim(X^\perp\cap Y^\perp)\;=\;\codim\range P_{X^\perp}P_Y.
\end{aligned}
\]
In particular,
\[
\ind(X,Y)\;=\;-\ind(X^0,Y^0)\;=\;-\ind(X^\perp,Y^\perp)\;=\;\ind(P_{X^\perp},P_Y).
\]
\end{proposition}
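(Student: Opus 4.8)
The plan is to repackage all four conditions into the Fredholmness of a single bounded operator and then move between them with the closed range theorem. Define $T:X\oplus Y\to Z$ by $T(x,y)=x-y$, where $X\oplus Y$ carries the natural sum inner product. Then $\ker T=\{(x,x):x\in X\cap Y\}\cong X\cap Y$, $\range T=X+Y$, and $\coker T\cong Z/(X+Y)$. Hence $T$ is a Fredholm operator if and only if $(X,Y)$ is a Fredholm pair, and in that case $\ind(X,Y)=\ind T$. This reduces condition (1) to a statement about one operator whose adjoint can be computed explicitly.

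A direct computation gives $T^*z=(P_Xz,-P_Yz)$, so that $\ker T^*=\{z:P_Xz=0,\ P_Yz=0\}=X^\perp\cap Y^\perp$. The closed range theorem then does the bookkeeping: $T$ has closed range if and only if $T^*$ does, and in that case $\coker T\cong\ker T^*$, that is $\codim(X+Y)=\dim(X^\perp\cap Y^\perp)$. Applying the same analysis to the pair $(X^\perp,Y^\perp)$—legitimate since $X,Y$ are closed, so $(X^\perp)^\perp=X$—yields $\codim(X^\perp+Y^\perp)=\dim(X\cap Y)$. These two identities show that $(X,Y)$ and $(X^\perp,Y^\perp)$ are Fredholm pairs simultaneously, and combining them gives $\ind(X^\perp,Y^\perp)=\codim(X+Y)-\dim(X\cap Y)=-\ind(X,Y)$. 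The equivalence (2)$\Leftrightarrow$(3), together with the matching of the annihilator data to the orthogonal-complement data, is then immediate from the (conjugate-linear) Riesz isomorphism $Z\cong Z^*$, under which $X^\perp$ corresponds to $X^0$ and $Y^\perp$ to $Y^0$.

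For (4), I would analyze $S:=P_{X^\perp}P_Y$ restricted to $\range P_Y=Y$, i.e. $S=P_{X^\perp}|_Y:Y\to X^\perp$. Since $\ker P_{X^\perp}=X$, one has $\ker S=X\cap Y$; and a short adjoint computation (using that $P_{X^\perp}$ is self-adjoint and fixes $X^\perp$) shows that the orthogonal complement of $\range S$ inside $X^\perp$ is exactly $X^\perp\cap Y^\perp$, so $\codim\range S=\dim(X^\perp\cap Y^\perp)$ once the range is closed. Thus $S$ is Fredholm if and only if $(X^\perp,Y^\perp)$ is a Fredholm pair, with $\ind S=\dim(X\cap Y)-\dim(X^\perp\cap Y^\perp)=\ind(X,Y)$, which gives (3)$\Leftrightarrow$(4) and the final index identity $\ind(P_{X^\perp},P_Y)=\ind(X,Y)$.

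The one genuinely substantive point—hidden in the phrase ``once the range is closed''—is the transfer of the closedness hypothesis, namely that ``$X+Y$ closed'', ``$X^\perp+Y^\perp$ closed'', and ``$\range P_{X^\perp}P_Y$ closed'' are all equivalent. I expect this to be the main obstacle, and it is precisely what the closed range theorem supplies for the operators $T$ and $S$ above. The finite-dimensionality of the various intersections and quotients, by contrast, comes for free from the elementary complement relations $(X+Y)^\perp=X^\perp\cap Y^\perp$ and $(X^\perp+Y^\perp)^\perp=X\cap Y$, so no further work is needed there beyond recording the signs.
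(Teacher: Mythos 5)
The paper does not actually prove this proposition: it is recorded as a standard fact with pointers to Kato (\S IV.4.2) and Boo\ss{}--Wojciechowski (\S 24), so there is no in-paper argument to compare against. Your proposal is a correct, essentially self-contained reconstruction of the standard proof, and the main reductions are sound: $T(x,y)=x-y$ encodes exactly the Fredholm-pair data of $(X,Y)$; the computation $T^*z=(P_Xz,-P_Yz)$ gives $\ker T^*=X^\perp\cap Y^\perp$ and hence $\codim(X+Y)=\dim(X^\perp\cap Y^\perp)$ once $X+Y$ is closed; the conjugate-linear Riesz isomorphism disposes of $(2)\Leftrightarrow(3)$; and $S=P_{X^\perp}|_Y\colon Y\to X^\perp$ has $\ker S=X\cap Y$ and $(\range S)^\perp\cap X^\perp=\ker S^*=X^\perp\cap Y^\perp$, which yields $(3)\Leftrightarrow(4)$ and all the stated index identities with the correct signs.

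One step is asserted more optimistically than it is proved. You rightly isolate the transfer of closedness among ``$X+Y$ closed'', ``$X^\perp+Y^\perp$ closed'' and ``$\range P_{X^\perp}P_Y$ closed'' as the substantive point, but the closed range theorem applied to $T$ does not deliver it: $\range T^*=\{(P_Xz,-P_Yz):z\in Z\}$ is a subspace of $X\oplus Y$ with no visible relation to $X^\perp+Y^\perp$. The operator that does the job is $S$, and it needs two elementary orthogonal decompositions that you never state: $X+Y=X\oplus P_{X^\perp}(Y)=X\oplus\range S$ (write $y=P_Xy+P_{X^\perp}y$) and, since $S^*=P_Y|_{X^\perp}$, likewise $X^\perp+Y^\perp=Y^\perp\oplus\range S^*$. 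Because an orthogonal sum with a closed subspace is closed if and only if the other summand is closed, these give ``$X+Y$ closed $\Leftrightarrow\range S$ closed $\Leftrightarrow\range S^*$ closed $\Leftrightarrow X^\perp+Y^\perp$ closed'', the middle equivalence being the closed range theorem. With those two lines inserted, your argument is complete.
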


We return to Cauchy data spaces. Let $\cPi_+(\AAA)$ be the orthogonal projection $\cHH(\AAA)\twoheadrightarrow H_{[0,\infty)}^{-1/2}(\AAA)$ and $\cPP(\DD^+)$ be the orthogonal projection $\cHH(\AAA)\twoheadrightarrow\CCC_{\max}^+$. Let $\cTT:=\cPi_+(\AAA)\cPP(\DD^+):\CCC_{\max}^+\to H_{[0,\infty)}^{-1/2}(\AAA)$. The following is a quick consequence of Propositions \ref{P:Fredpair} and \ref{P:Fredequiv}.

\begin{theorem}\label{T:indid}
$\cTT$ is a Fredholm operator and $\ind\cTT=\ind\DD_{\rm APS}^+$.
\end{theorem}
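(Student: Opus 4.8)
The plan is to recognize $\cTT$ as exactly the operator attached to the Fredholm pair of Proposition \ref{P:Fredpair}, so that both its Fredholmness and its index fall out of Proposition \ref{P:Fredequiv} with no further work. Concretely, I would set $X := H_{(-\infty,0)}^{1/2}(\AAA)$ and $Y := \CCC_{\max}^+$, viewed as closed subspaces of the Hilbert space $Z := \cHH(\AAA)$. By Proposition \ref{P:Fredpair} the pair $(X,Y)$ is Fredholm with $\ind(X,Y) = \ind\DD_{\rm APS}^+$. Applying the equivalence (1)$\Leftrightarrow$(4) of Proposition \ref{P:Fredequiv} then tells us that $(P_{X^\perp},P_Y)$ is a Fredholm pair of projections, i.e. $P_{X^\perp}P_Y : \range P_Y \to \range P_{X^\perp}$ is Fredholm, and the index identity of that proposition gives $\ind(P_{X^\perp},P_Y) = \ind(X,Y) = \ind\DD_{\rm APS}^+$.

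The one step that needs genuine verification is the identification of this abstract operator $P_{X^\perp}P_Y$ with $\cTT$. Since $Y = \CCC_{\max}^+$, we have $P_Y = \cPP(\DD^+)$ and $\range P_Y = \CCC_{\max}^+$ by definition, so the only thing to check is $P_{X^\perp} = \cPi_+(\AAA)$, equivalently that the orthogonal complement of $X = H_{(-\infty,0)}^{1/2}(\AAA)$ inside $\cHH(\AAA)$ is precisely $H_{[0,\infty)}^{-1/2}(\AAA)$. To see this I would take $a=0$ in Definition \ref{D:hybridSob}, giving $\cHH(\AAA) = H_{(-\infty,0]}^{1/2}(\AAA) \oplus H_{(0,\infty)}^{-1/2}(\AAA)$, and note that in the eigenbasis $\{\bfu_j\}$ the $\cHH$-inner product is diagonal: distinct eigenspaces of $\AAA$ are mutually $\cHH$-orthogonal. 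Hence $X = \overline{\operatorname{span}}\{\bfu_j : \lambda_j < 0\}$ and its orthogonal complement is $\overline{\operatorname{span}}\{\bfu_j : \lambda_j \ge 0\}$, which is the span of the kernel of $\AAA$ together with $H_{(0,\infty)}^{-1/2}(\AAA)$.

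The delicate point here is the eigenvalue $\lambda = 0$: the open interval defining $X$ and the closed interval $(-\infty,0]$ built into $\cHH(\AAA)$ differ exactly by the kernel, and on the kernel $\cHH(\AAA)$ carries the $H^{1/2}$ structure while the target $H_{[0,\infty)}^{-1/2}(\AAA)$ carries the $H^{-1/2}$ one. This mismatch is harmless precisely because $\AAA$ has discrete spectrum, so the kernel is finite-dimensional and all Sobolev norms on it coincide; consequently $\overline{\operatorname{span}}\{\bfu_j : \lambda_j \ge 0\} = H_{[0,\infty)}^{-1/2}(\AAA)$ as closed subspaces of $\cHH(\AAA)$, and the orthogonal projection onto it is exactly $\cPi_+(\AAA)$. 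I expect this kernel bookkeeping to be the main (indeed the only) obstacle in an otherwise formal argument.

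With the identification $P_{X^\perp} = \cPi_+(\AAA)$ in hand, we conclude that $\cTT = \cPi_+(\AAA)\cPP(\DD^+) = P_{X^\perp}P_Y|_{\range P_Y}$ is Fredholm and $\ind\cTT = \ind(P_{X^\perp},P_Y) = \ind\DD_{\rm APS}^+$, which is the assertion of the theorem.
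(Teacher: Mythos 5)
Your proposal is correct and is exactly the paper's intended argument: the paper derives Theorem \ref{T:indid} as a ``quick consequence'' of Propositions \ref{P:Fredpair} and \ref{P:Fredequiv} via the equivalence of Fredholm pairs with Fredholm pairs of projections, which is precisely your route. Your careful identification of $X^\perp$ with $H_{[0,\infty)}^{-1/2}(\AAA)$ inside $\cHH(\AAA)$, including the finite-dimensional kernel bookkeeping at $\lambda=0$, correctly supplies the detail the paper leaves implicit.
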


\subsection{$L^2$-situation}\label{SS:L2situ}

We define the \emph{$L^2$-Cauchy data space} $\CCC^+:=\CCC_{\max}^+\cap L^2(\p M,E^+|_{\p M})$. One can apply the idea of ``criss-cross reduction'' in \cite{BFO} to show that $\CCC^+$ is a closed subspace of $L^2(\p M,E^+|_{\p M})$. We briefly present this argument. First, there exists a closed subspace $V\subset\cHH(\AAA)$, such that $\CCC_{\max}^+$ can be written as a direct sum of transversal (not necessarily orthogonal) pair of subspaces
\[
\CCC_{\max}^+\;=\;(H_{(-\infty,0)}^{1/2}(\AAA)\cap\CCC_{\max}^+)\;\dot{+}\;V.
\]
Let $\pi_+$ (resp. $\pi_-$) be the projection of $V$ onto $H_{[0,\infty)}^{-1/2}(\AAA)$ (resp. $H_{(-\infty,0)}^{1/2}(\AAA)$) along $H_{(-\infty,0)}^{1/2}(\AAA)$ (resp. $H_{[0,\infty)}^{-1/2}(\AAA)$). Then $\pi_+$ is injective and $\range\pi_+=\range\cTT$ is closed. By closed graph theorem, $\pi_+$ has a bounded inverse $\iota_+:\range\pi_+\to V$. We then have a bounded operator $\check{\phi}:=\pi_-\circ\iota_+:\range\pi_+\to\range\pi_-$. This gives another expression of $\CCC_{\max}^+$:
\begin{equation}\label{E:Cmaxdecomp}
\CCC_{\max}^+\;=\;(H_{(-\infty,0)}^{1/2}(\AAA)\cap\CCC_{\max}^+)+{\rm graph}(\check{\phi}).
\end{equation}

Let $\phi$ be the restriction of $\check{\phi}$ to $L^2(\p M,E^+|_{\p M})$. Then $\dom\phi$ is closed in $L^2(\p M,E^+|_{\p M})$. Viewed as an operator $\dom\phi\to L^2(\p M,E^+|_{\p M})$, $\phi$ is still bounded. Note that now $\CCC^+$ can be written as
\[
\CCC^+\;=\;(H_{(-\infty,0)}^{1/2}(\AAA)\cap\CCC_{\max}^+)+{\rm graph}(\phi).
\]
Since the first summand is finite-dimensional, $\CCC^+$ is closed in $L^2(\p M,E^+|_{\p M})$.

Like in Subsection \ref{SS:Fredproj}, we define the orthogonal projections
\[
\Pi_+:L^2(\p M,E^+|_{\p M})\twoheadrightarrow L_{[0,\infty)}^2(\AAA)\quad\mbox{and}\quad P(\DD^+):L^2(\p M,E^+|_{\p M})\twoheadrightarrow\CCC^+.
\]
And let
\[
T:=\Pi_+(\AAA)P(\DD^+):\CCC^+\to L_{[0,\infty)}^2(\AAA).
\]
It is clear that $\ker T=\ker\cTT$, and
\[
\begin{aligned}
\range T&\;=\;(L_{(-\infty,0)}^2(\AAA)+\CCC^+)\cap L_{[0,\infty)}^2(\AAA) \\
&\;=\;(L_{(-\infty,0)}^2(\AAA)+\CCC_{\max}^+)\cap L_{[0,\infty)}^2(\AAA) \\
&\;\supset\;(H_{(-\infty,0)}^{1/2}(\AAA)+\CCC_{\max}^+)\cap L_{[0,\infty)}^2(\AAA).
\end{aligned}
\]
On the other hand, since the $L^2$-norm is stronger than the $\cHH$-norm on $L_{[0,\infty)}^2(\AAA)$,
\[
\begin{aligned}
\range T&\;=\;({\rm cl}_{L^2}(H_{(-\infty,0)}^{1/2}(\AAA))+\CCC_{\max}^+)\cap L_{[0,\infty)}^2(\AAA) \\
&\;\subset\;({\rm cl}_{\cHH}(H_{(-\infty,0)}^{1/2}(\AAA))+\CCC_{\max}^+)\cap L_{[0,\infty)}^2(\AAA) \\
&\;\subset\;{\rm cl}_{\cHH}(H_{(-\infty,0)}^{1/2}(\AAA)+\CCC_{\max}^+)\cap L_{[0,\infty)}^2(\AAA) \\
&\;=\;(H_{(-\infty,0)}^{1/2}(\AAA)+\CCC_{\max}^+)\cap L_{[0,\infty)}^2(\AAA),
\end{aligned}
\]
where we used Proposition \ref{P:Fredpair} in the last line. Therefore
\[
\begin{aligned}
\range T&\;=\;(H_{(-\infty,0)}^{1/2}(\AAA)+\CCC_{\max}^+)\cap L_{[0,\infty)}^2(\AAA) \\
&\;=\;\range\cTT\cap L_{[0,\infty)}^2(\AAA).
\end{aligned}
\]
and is a closed subspace of $L_{[0,\infty)}^2(\AAA)$. Let $\check{W}$ be the finite-dimensional orthogonal complement of $\range\cTT$ in $H_{[0,\infty)}^{-1/2}(\AAA)$ and let $W:=\check{W}|_{L_{[0,\infty)}^2(\AAA)}$. Then
\begin{equation}\label{E:decomp}
L_{[0,\infty)}^2(\AAA)\;=\;\range T+W.
\end{equation}
Taking closure with respect to the $\cHH$-norm for both sides implies that $H_{[0,\infty)}^{-1/2}(\AAA)=\range\cTT+W$. Hence $W=\check{W}$. It follows that \eqref{E:decomp} is a direct sum decomposition. Therefore
\[
\codim\range T\;=\;\dim W\;=\;\dim\check{W}\;=\;\codim\range\cTT.
\]

To sum up, we obtain an $L^2$-version of Theorem \ref{T:indid}:

\begin{theorem}\label{T:L2indid}
$T$ is a Fredholm operator and $\ind T=\ind\DD_{\rm APS}^+$.
\end{theorem}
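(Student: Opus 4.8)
The plan is to deduce Theorem \ref{T:L2indid} from the Fredholmness of $\cTT$ proved in Theorem \ref{T:indid}, using the two structural identities for $T$ that were already extracted from the criss-cross reduction in the discussion preceding the statement. The point is that $T$ differs from $\cTT$ only through the passage from the hybrid space $\cHH(\AAA)$ to $L^2(\p M,E^+|_{\p M})$, and the analysis above shows that this passage affects neither the kernel nor the cokernel dimension. So the proof itself should be pure bookkeeping, with all the genuine work having been done to establish closedness of $\CCC^+$ and the range identity for $T$.

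Concretely, I would first record the kernel: since $\cTT$ is Fredholm by Theorem \ref{T:indid}, $\dim\ker\cTT<\infty$, and as $\ker T=\ker\cTT$ we obtain $\dim\ker T=\dim\ker\cTT<\infty$. For the range, I would invoke the computation carried out above that $\range T=\range\cTT\cap L_{[0,\infty)}^2(\AAA)$ is closed in $L_{[0,\infty)}^2(\AAA)$, with orthogonal complement exactly the finite-dimensional space $\check W$ that also complements $\range\cTT$ in $H_{[0,\infty)}^{-1/2}(\AAA)$; this is precisely the content of the decomposition \eqref{E:decomp} and its $\cHH$-closure, which forces $W=\check W$ and hence $\codim\range T=\dim\check W=\codim\range\cTT<\infty$. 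Together these two finiteness statements, plus closedness of $\range T$, show that $T$ is a Fredholm operator. The index then follows immediately:
\[
\ind T\;=\;\dim\ker T-\codim\range T\;=\;\dim\ker\cTT-\codim\range\cTT\;=\;\ind\cTT\;=\;\ind\DD_{\rm APS}^+,
\]
the final equality being Theorem \ref{T:indid}.

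I would emphasize that the substantive difficulty is not in this assembly but in the range analysis that precedes the statement, namely showing that restricting from $\cHH(\AAA)$ to $L^2$ neither destroys closedness of the range nor changes its codimension even though the two norms genuinely differ. The main obstacle is controlling the interaction of the two topologies on $L_{[0,\infty)}^2(\AAA)$: one must know both that the $L^2$-norm dominates the $\cHH$-norm there (so that $\range T\subset\range\cTT\cap L_{[0,\infty)}^2(\AAA)$) and that the finite-dimensional complement survives intact under $\cHH$-closure. Once those facts are in hand—as they are in the criss-cross reduction and the comparison leading to \eqref{E:decomp}—the Fredholmness and the index identity are a direct transcription of the corresponding statements for $\cTT$.
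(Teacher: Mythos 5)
Your proposal is correct and follows essentially the same route as the paper: Theorem \ref{T:L2indid} is stated there as a summary of the preceding criss-cross reduction, using exactly the identities $\ker T=\ker\cTT$ and $\range T=\range\cTT\cap L_{[0,\infty)}^2(\AAA)$ together with the decomposition \eqref{E:decomp} and $W=\check W$ to transfer Fredholmness and the index from $\cTT$. The only nitpick is that $W$ complements $\range T$ as a direct (not necessarily orthogonal) summand in $L_{[0,\infty)}^2(\AAA)$, but this does not affect the codimension count.
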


\begin{corollary}\label{C:L2indid}
$(L_{(-\infty,0)}^2(\AAA),\CCC^+)$ is a Fredholm pair in $L^2(\p M,E^+|_{\p M})$ and
\[
\ind(L_{(-\infty,0)}^2(\AAA),\CCC^+)\;=\;\ind\DD_{\rm APS}^+.
\]
\end{corollary}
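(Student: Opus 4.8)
The plan is to recognize the corollary as an immediate translation of Theorem \ref{T:L2indid} through the dictionary of Proposition \ref{P:Fredequiv}. Set $Z=L^2(\p M,E^+|_{\p M})$, $X=L_{(-\infty,0)}^2(\AAA)$ and $Y=\CCC^+$. Because the spectral projection gives the orthogonal splitting $Z=L_{(-\infty,0)}^2(\AAA)\oplus L_{[0,\infty)}^2(\AAA)$, the orthogonal complement of $X$ is precisely $X^\perp=L_{[0,\infty)}^2(\AAA)$, and the associated orthogonal projections are $P_{X^\perp}=\Pi_+(\AAA)$ and $P_Y=P(\DD^+)$.

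First I would observe that the operator $T=\Pi_+(\AAA)P(\DD^+)\colon\CCC^+\to L_{[0,\infty)}^2(\AAA)$ from Subsection \ref{SS:L2situ} is exactly the operator $P_{X^\perp}P_Y\colon\range P_Y\to\range P_{X^\perp}$ appearing in Definition \ref{D:Fredproj}. By Theorem \ref{T:L2indid}, $T$ is Fredholm with $\ind T=\ind\DD_{\rm APS}^+$; hence $(P_{X^\perp},P_Y)$ is a Fredholm pair of projections, that is, condition (4) of Proposition \ref{P:Fredequiv} holds.

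Invoking the equivalence of (4) and (1) in Proposition \ref{P:Fredequiv} then gives that $(X,Y)=(L_{(-\infty,0)}^2(\AAA),\CCC^+)$ is a Fredholm pair in $Z$, and the index identity recorded in the same proposition yields
\[
\ind(L_{(-\infty,0)}^2(\AAA),\CCC^+)\;=\;\ind(P_{X^\perp},P_Y)\;=\;\ind T\;=\;\ind\DD_{\rm APS}^+,
\]
which is the assertion.

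There is no genuine obstacle remaining: all the analytic work---the closedness of $\CCC^+$ in $L^2(\p M,E^+|_{\p M})$, the identification of $\range T$ with $\range\cTT\cap L_{[0,\infty)}^2(\AAA)$, and the computation of its codimension---has already been carried out in order to establish Theorem \ref{T:L2indid}. The only point requiring care is the bookkeeping that $T$ coincides with $P_{X^\perp}P_Y$ under the identification $X^\perp=L_{[0,\infty)}^2(\AAA)$, so that the sign convention $\ind(X,Y)=\ind(P_{X^\perp},P_Y)$ of Proposition \ref{P:Fredequiv} is applied correctly; once this matching is in place the corollary follows formally.
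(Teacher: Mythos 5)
Your proposal is correct and matches the paper's intent exactly: the corollary is left without an explicit proof precisely because it is the translation of Theorem \ref{T:L2indid} through the equivalence (1) $\Leftrightarrow$ (4) of Proposition \ref{P:Fredequiv}, with $X^\perp=L_{[0,\infty)}^2(\AAA)$ and $T=P_{X^\perp}P_Y$, just as you describe. Nothing further is needed.
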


\section{Cauchy data spaces and boundary value problems}\label{S:Cauchy-bvp}

\subsection{Twisted orthogonality of Cauchy data spaces}\label{SS:orth}

By Proposition \ref{P:Fredequiv} and Corollary \ref{C:L2indid}, $(L_{[0,\infty)}^2(\AAA),(\CCC^+)^0)$ and $(L_{(0,\infty)}^2(\AAA^\sharp),(\CCC^-)^0)$ are Fredholm pairs in $L^2(\p M,E^+|_{\p M})$ and $L^2(\p M,E^-|_{\p M})$, respectively. And they satisfy
\begin{multline}\label{E:annihindid}
\ind(L_{[0,\infty)}^2(\AAA),(\CCC^+)^0)\;=\;-\ind(L_{(-\infty,0)}^2(\AAA),\CCC^+), \\
\;=\;\ind(L_{(-\infty,0]}^2(\AAA^\sharp),\CCC^-)\;=\;-\ind(L_{(0,\infty)}^2(\AAA^\sharp),(\CCC^-)^0).
\end{multline}

The following property of Fredholm pairs can be verified easily.

\begin{lemma}\label{L:inclu-equal}
Let $(X,Y_1),(X,Y_2)$ be two Fredholm pairs in a Hilbert space $Z$. If $Y_1\subset Y_2$ and $\ind(X,Y_1)=\ind(X,Y_2)$, then $Y_1=Y_2$.
\end{lemma}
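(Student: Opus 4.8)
The plan is to exploit the monotonicity, under the inclusion $Y_1\subset Y_2$, of the two finite quantities $\dim(X\cap Y)$ and $\codim(X+Y)$ that make up the Fredholm index, and then to use the hypothesis $\ind(X,Y_1)=\ind(X,Y_2)$ to squeeze both of them into equalities. From these equalities the coincidence $Y_1=Y_2$ follows by an elementary element-chase, with no analytic input beyond what is already packaged in Definition \ref{D:Fredpair}.

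First I would record the two nested inclusions coming from $Y_1\subset Y_2$, namely $X\cap Y_1\subset X\cap Y_2$ and $X+Y_1\subset X+Y_2$. The first gives $\dim(X\cap Y_1)\le\dim(X\cap Y_2)$, while the second gives $\codim(X+Y_1)\ge\codim(X+Y_2)$; all four numbers are finite because the pairs are Fredholm. Writing the index equality out via Definition \ref{D:Fredpair} and rearranging yields
\[
\dim(X\cap Y_1)-\dim(X\cap Y_2)\;=\;\codim(X+Y_1)-\codim(X+Y_2).
\]
The left-hand side is $\le 0$ and the right-hand side is $\ge 0$, so both vanish. Hence $\dim(X\cap Y_1)=\dim(X\cap Y_2)$ and $\codim(X+Y_1)=\codim(X+Y_2)$. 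Since in each case we have a nested pair of subspaces with equal finite dimension (resp. equal finite codimension), I conclude $X\cap Y_1=X\cap Y_2$ and $X+Y_1=X+Y_2$.

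Finally I would deduce $Y_1=Y_2$. As $Y_1\subset Y_2$ is given, it suffices to check $Y_2\subset Y_1$. Take $y\in Y_2$. Using $X+Y_2=X+Y_1$, write $y=x+y_1$ with $x\in X$ and $y_1\in Y_1$. Then $x=y-y_1\in Y_2$ (because $y\in Y_2$ and $y_1\in Y_1\subset Y_2$), so $x\in X\cap Y_2=X\cap Y_1\subset Y_1$; therefore $y=x+y_1\in Y_1$. This proves $Y_2\subset Y_1$, whence $Y_1=Y_2$.

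I do not expect a genuine obstacle here: the only point requiring a moment's care is the passage from ``equal dimension (or codimension) together with inclusion'' to ``equality of subspaces,'' which is valid precisely because the Fredholm hypothesis makes these dimensions and codimensions finite. Everything else is linear algebra, so none of the closedness or boundedness arguments used elsewhere in the paper (e.g. the extending map $\EE$ or the closed range of $\DD_{\rm APS}^+$) enters the proof.
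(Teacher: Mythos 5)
Your proof is correct and complete. The paper itself offers no argument for this lemma (it is introduced with ``can be verified easily''), and your verification---monotonicity of $\dim(X\cap Y)$ and $\codim(X+Y)$ under the inclusion $Y_1\subset Y_2$, squeezing both to equalities via the index hypothesis, and then the element-chase $y=x+y_1$ with $x\in X\cap Y_2=X\cap Y_1\subset Y_1$---is exactly the standard elementary argument the author presumably had in mind; nothing further is needed.
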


\begin{proposition}\label{P:anniheq}
Recall that $c(\tau)$ induces an isomorphism between $L^2(\p M,E^-|_{\p M})$ and $L^2(\p M,E^+|_{\p M})$. Then $c(\tau)(\CCC^-)=(\CCC^+)^0$, $c(\tau)(\CCC^+)=(\CCC^-)^0$.
\end{proposition}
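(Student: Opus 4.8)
The plan is to establish each equality as an inclusion that is then upgraded to an equality by an index comparison. I will describe $c(\tau)(\CCC^-)=(\CCC^+)^0$ in detail; the companion identity follows by the same scheme.

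\emph{The easy inclusion.} First I would prove $c(\tau)(\CCC^-)\subset(\CCC^+)^0$ directly from the generalized Green's formula \eqref{E:genGreensfor}. Take $\bfv\in\CCC^-$, say $\bfv=\cRR(v)$ with $v\in\ker\DD_{\max}^-$, and any $\bfu=\cRR(u)\in\CCC^+$ with $u\in\ker\DD_{\max}^+$. Since $\DD_{\max}^+u=0$ and $\DD_{\max}^-v=0$, \eqref{E:genGreensfor} gives $\big(c(\tau)\bfu;\bfv\big)_{L^2(\dm)}=0$. Using that Clifford multiplication by the unit covector $\tau$ is skew-adjoint (and squares to $-\id$), this is equivalent to $\big(c(\tau)\bfv;\bfu\big)_{L^2(\dm)}=0$ for every $\bfu\in\CCC^+$, i.e.\ $c(\tau)\bfv\in(\CCC^+)^0$.

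\emph{Transport and index comparison.} Next I would exploit that $c(\tau):L^2(\p M,E^-|_{\p M})\to L^2(\p M,E^+|_{\p M})$ is a unitary isomorphism. Because $\AAA^\sharp=c(\tau)\AAA c(\tau)$ sends the $\lambda$-eigenspace of $\AAA$ to the $(-\lambda)$-eigenspace (this is the mechanism behind Lemma \ref{L:admap}), $c(\tau)$ maps $L_{(-\infty,0]}^2(\AAA^\sharp)$ isomorphically onto $L_{[0,\infty)}^2(\AAA)$. A Hilbert-space isomorphism carries a Fredholm pair to a Fredholm pair of equal index, so the pair $(L_{(-\infty,0]}^2(\AAA^\sharp),\CCC^-)$ is carried to $(L_{[0,\infty)}^2(\AAA),c(\tau)\CCC^-)$ with
\[
\ind\big(L_{[0,\infty)}^2(\AAA),\, c(\tau)\CCC^-\big)\;=\;\ind\big(L_{(-\infty,0]}^2(\AAA^\sharp),\, \CCC^-\big).
\]
By \eqref{E:annihindid} the right-hand side equals $\ind\big(L_{[0,\infty)}^2(\AAA),(\CCC^+)^0\big)$. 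Thus $(L_{[0,\infty)}^2(\AAA),c(\tau)\CCC^-)$ and $(L_{[0,\infty)}^2(\AAA),(\CCC^+)^0)$ are Fredholm pairs of the same index whose second members obey the inclusion from the previous step, so Lemma \ref{L:inclu-equal} forces $c(\tau)(\CCC^-)=(\CCC^+)^0$.

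\emph{The second identity.} The equality $c(\tau)(\CCC^+)=(\CCC^-)^0$ follows by running the same three steps with the roles of $\DD^+$ and $\DD^-$ (equivalently $\AAA$ and $\AAA^\sharp$) interchanged: Green's formula gives $c(\tau)(\CCC^+)\subset(\CCC^-)^0$, then $c(\tau)$ transports $(L_{(-\infty,0)}^2(\AAA),\CCC^+)$ to $(L_{(0,\infty)}^2(\AAA^\sharp),c(\tau)\CCC^+)$, and \eqref{E:annihindid} matches its index with that of $(L_{(0,\infty)}^2(\AAA^\sharp),(\CCC^-)^0)$, so Lemma \ref{L:inclu-equal} again yields equality. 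Alternatively one can deduce it from the first identity by applying $c(\tau)$ and taking orthogonal complements, using $c(\tau)^2=-\id$ and $(\CCC^\pm)^{00}=\CCC^\pm$.

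\emph{Main obstacle.} The inclusion is immediate from Green's formula but by itself says nothing about equality; the substance is entirely in the index bookkeeping. The delicate point is keeping the spectral intervals consistent under $c(\tau)$—it reverses the sign of the spectrum, so the half-line $(-\infty,0]$ for $\AAA^\sharp$ becomes $[0,\infty)$ for $\AAA$—and verifying that the transported pairs are precisely the ones whose indices are tied together in \eqref{E:annihindid}, so that the hypotheses of Lemma \ref{L:inclu-equal} are met.
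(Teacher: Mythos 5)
Your proposal is correct and follows essentially the same route as the paper: the inclusion $c(\tau)(\CCC^-)\subset(\CCC^+)^0$ from the generalized Green's formula, then transport of the Fredholm pair $(L_{(-\infty,0]}^2(\AAA^\sharp),\CCC^-)$ by the isometry $c(\tau)$, the index matching via \eqref{E:annihindid}, and Lemma \ref{L:inclu-equal} to upgrade the inclusion to an equality. Your added care about the sign reversal of the spectrum under $c(\tau)$ and the skew-adjointness of $c(\tau)$ only makes explicit what the paper leaves implicit.
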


\begin{proof}
We only need to show the first equality. Let $\bfv\in\CCC^-$. Then there exists a $v\in\ker\DD_{\max}^+$ such that $\cRR(v)=\bfv$. For any $\bfu\in\CCC^+$, there again exists a $u\in\ker\DD_{\max}^+$ such that $\cRR(u)=\bfu$. By \eqref{E:genGreensfor},
\[
0=(\DD_{\max}^+u;v)_{L^2(M)}-(u;\DD_{\max}^-v)_{L^2(M)}=(\bfu,c(\tau)\bfv)_{L^2(\p M)}\ \Rightarrow\ c(\tau)\bfv\in(\CCC^+)^0.
\]
Hence $c(\tau)(\CCC^-)\subset(\CCC^+)^0$.

Notice that the isomorphism $c(\tau)$ maps the Fredholm pair $(L_{(-\infty,0]}^2(\AAA^\sharp),\CCC^-)$ to the pair $(L_{[0,\infty)}^{1/2}(\AAA),c(\tau)(\CCC^-))$. Thus the latter is a Fredholm pair in $L^2(\p M,E^+|_{\p M})$ and 
\[
\ind(L_{[0,\infty)}^2(\AAA),c(\tau)(\CCC^-))\;=\;\ind(L_{(-\infty,0]}^2(\AAA^\sharp),\CCC^-)\;\xlongequal{\eqref{E:annihindid}}\;\ind(L_{[0,\infty)}^{1/2}(\AAA),(\CCC^+)^0).
\]
Using the fact that $c(\tau)(\CCC^-)\subset(\CCC^+)^0$ and Lemma \ref{L:inclu-equal}, one has $c(\tau)(\CCC^-)=(\CCC^+)^0$.
\end{proof}

\begin{remark}\label{R:anniheq}
In the same way, one can prove that $c(\tau)(\CCC_{\max}^-)=(\CCC_{\max}^+)^0$, $c(\tau)(\CCC_{\max}^+)=(\CCC_{\max}^-)^0$.
\end{remark}

Since the pairing between elements of $(L^2(\p M,E^+|_{\p M}))^*\cong L^2(\p M,E^+|_{\p M})$ and elements of $L^2(\p M,E^+|_{\p M})$ coincides with the inner product on $L^2(\p M,E^+|_{\p M})$, we have $(\CCC^+)^\perp=(\CCC^+)^0$. 
Therefore, we obtian the following $L^2$-decomposition theorem.

\begin{theorem}\label{T:L2decomp}
$\CCC^+$ and $c(\tau)(\CCC^-)$ are orthogonal complementary subspaces of $L^2(\p M,E^+|_{\p M})$. Similar statement is true for $\CCC^-$ and $c(\tau)(\CCC^+)$.
\end{theorem}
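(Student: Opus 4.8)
The plan is to read the theorem off directly from the twisted-orthogonality identity already recorded in Proposition \ref{P:anniheq}, combined with the elementary fact that, under the canonical self-duality of $L^2$, the annihilator of a subspace coincides with its orthogonal complement. First I would recall two facts established earlier: $\CCC^+$ is a closed subspace of $L^2(\pM,E^+|_{\pM})$ (Subsection \ref{SS:L2situ}), and, since the natural pairing on $(L^2(\pM,E^+|_{\pM}))^*\cong L^2(\pM,E^+|_{\pM})$ is precisely the $L^2$-inner product, one has $(\CCC^+)^0=(\CCC^+)^\perp$ (as noted just above the statement).

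Next I would assemble the chain of identities. The first equality of Proposition \ref{P:anniheq} reads $c(\tau)(\CCC^-)=(\CCC^+)^0$, and combining this with $(\CCC^+)^0=(\CCC^+)^\perp$ gives
\[
c(\tau)(\CCC^-)\;=\;(\CCC^+)^\perp.
\]
Because $\CCC^+$ is closed, the ambient Hilbert space splits orthogonally as $L^2(\pM,E^+|_{\pM})=\CCC^+\oplus(\CCC^+)^\perp=\CCC^+\oplus c(\tau)(\CCC^-)$, which is exactly the assertion that $\CCC^+$ and $c(\tau)(\CCC^-)$ are orthogonal complementary subspaces. For the second statement I would run the identical argument on the $E^-$-side: $\CCC^-$ is closed in $L^2(\pM,E^-|_{\pM})$, the annihilator there again equals the orthogonal complement, and the second equality $c(\tau)(\CCC^+)=(\CCC^-)^0=(\CCC^-)^\perp$ of Proposition \ref{P:anniheq} yields the orthogonal decomposition $L^2(\pM,E^-|_{\pM})=\CCC^-\oplus c(\tau)(\CCC^+)$.

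The genuine content of the theorem lies entirely in Proposition \ref{P:anniheq}, where the inclusion $c(\tau)(\CCC^-)\subset(\CCC^+)^0$ is upgraded to an equality by the Fredholm-index count of Lemma \ref{L:inclu-equal}; at the present stage there is therefore no real obstacle, only a bookkeeping assembly of previously proved statements. The single point worth a line of verification is that the space appearing on the right, $c(\tau)(\CCC^-)$, is closed: this is automatic once one identifies it with an orthogonal complement, and it also follows directly from the fact that $c(\tau)$ is a fiberwise unitary (so that $c(\tau)^*c(\tau)=-c(\tau)^2=\id$), hence an $L^2$-isometry carrying the closed subspace $\CCC^-$ to a closed subspace.
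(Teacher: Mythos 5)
Your proposal is correct and follows essentially the same route as the paper: the theorem is read off from Proposition \ref{P:anniheq} together with the identification $(\CCC^+)^0=(\CCC^+)^\perp$ under the $L^2$ self-duality and the closedness of $\CCC^+$, exactly as the paper does. Nothing further is needed.
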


Consider a bilinear form on $L^2(\p M,E|_{\p M})$ defined by
\[
\omega(\bfu,\bfv)\;:=\;(c(\tau)\bfu;\bfv)_{L^2(\p M)}.
\]
One can check that this is a symplectic form. Then Theorem \ref{T:L2decomp} indicates the following.

\begin{corollary}\label{C:Lagrangian}
The total $L^2$-Cauchy data space $\CCC^+\oplus\CCC^-$ of the total strongly Callias-type operator $\DD$ is a Lagrangian subspace of $L^2(\p M,E|_{\p M})$.
\end{corollary}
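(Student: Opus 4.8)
The plan is to check that $\CCC^+\oplus\CCC^-$ is isotropic and maximal isotropic for $\omega$, so that it coincides with its own symplectic annihilator and is therefore Lagrangian; both properties will follow at once from the orthogonal decompositions in Theorem \ref{T:L2decomp}. Note first that $\CCC^+\oplus\CCC^-$ is closed, being a direct sum of two closed subspaces lying in the orthogonal factors $L^2(\p M,E^\pm|_{\p M})$.

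The first step is to rewrite $\omega$ in terms of the $\ZZ_2$-grading. For boundary sections $\bfu=\bfu^++\bfu^-$ and $\bfv=\bfv^++\bfv^-$ with $\bfu^\pm,\bfv^\pm\in L^2(\p M,E^\pm|_{\p M})$, the oddness of $c(\tau)$ (which sends $E^\pm|_{\p M}$ to $E^\mp|_{\p M}$) together with the orthogonality of the graded summands gives
\[
\omega(\bfu,\bfv)\;=\;\big(c(\tau)\bfu^-;\bfv^+\big)_{L^2(\p M)}\;+\;\big(c(\tau)\bfu^+;\bfv^-\big)_{L^2(\p M)}.
\]
This decomposition is the one genuine computation, and the only point requiring care is that the mixed pairings between $E^+$- and $E^-$-components vanish.

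For isotropy I would take $\bfu,\bfv\in\CCC^+\oplus\CCC^-$, so that $\bfu^+,\bfv^+\in\CCC^+$ and $\bfu^-,\bfv^-\in\CCC^-$. Theorem \ref{T:L2decomp} states that $c(\tau)(\CCC^-)=(\CCC^+)^\perp$ and $c(\tau)(\CCC^+)=(\CCC^-)^\perp$, so $c(\tau)\bfu^-\perp\bfv^+$ and $c(\tau)\bfu^+\perp\bfv^-$; both terms above vanish and $\omega(\bfu,\bfv)=0$.

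For maximality I would let $\mathbf{w}=\mathbf{w}^++\mathbf{w}^-$ lie in the symplectic annihilator of $\CCC^+\oplus\CCC^-$ and test against the two factors separately. Taking $\bfv=\bfv^+\in\CCC^+$ forces $\big(c(\tau)\mathbf{w}^-;\bfv^+\big)_{L^2(\p M)}=0$ for all such $\bfv^+$, hence $c(\tau)\mathbf{w}^-\in(\CCC^+)^\perp=c(\tau)(\CCC^-)$ and, since $c(\tau)$ is an isomorphism, $\mathbf{w}^-\in\CCC^-$; symmetrically $\bfv=\bfv^-\in\CCC^-$ gives $\mathbf{w}^+\in\CCC^+$. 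Thus $\mathbf{w}\in\CCC^+\oplus\CCC^-$, which shows the space is maximal isotropic and completes the proof. Since the whole argument is a direct translation of Theorem \ref{T:L2decomp} through the graded form above, there is no real obstacle here; the only place to be attentive is the grading bookkeeping in the first step.
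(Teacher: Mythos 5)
Your proof is correct and follows exactly the route the paper intends: the paper states this corollary as an immediate consequence of Theorem \ref{T:L2decomp} without writing out details, and your argument is precisely the unwinding of that implication via the graded decomposition of $\omega$. The grading bookkeeping and the isotropy/co-isotropy steps are all in order.
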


\begin{remark}\label{R:Lagrangian}
From Remark \ref{R:anniheq}, one can also show that the total maximal Cauchy data spaces $\CCC_{\max}^+\oplus\CCC_{\max}^-$ is a Lagrangian subspace of $\cHH(\AAA)\oplus\cHH(\AAA^\sharp)$.
\end{remark}

\subsection{Cauchy data spaces as elliptic boundary conditions}\label{SS:Cauchy-bc}

In this subsection, we discuss an elliptic boundary condition induced by Cauchy data spaces.

Let
\[
\CCC_{1/2}^+\;:=\;\CCC_{\max}^+\cap H^{1/2}_\AAA(\p M,E^+|_{\p M}),\quad
\CCC_{1/2}^-\;:=\;\CCC_{\max}^-\cap H^{1/2}_{\AAA^\sharp}(\p M,E^-|_{\p M}).
\]
Using again the expression \eqref{E:Cmaxdecomp} of $\CCC_{\max}^+$, like in Subsection \ref{SS:L2situ}, we have
\begin{equation}\label{E:C1/2decomp}
\CCC_{1/2}^+\;=\;(H_{(-\infty,0)}^{1/2}(\AAA)\cap\CCC_{\max}^+)+{\rm graph}(\phi^{1/2}),
\end{equation}
where $\phi^{1/2}:\dom\phi^{1/2}\to H_{(-\infty,0)}^{-1/2}(\AAA)$ is the restriction of $\check{\phi}$ to $H_{[0,\infty)}^{1/2}(\AAA)$, and it is still a bounded operator. So $\CCC_{1/2}^+$ is a closed subspace of $\hHH(\AAA)$, and $c(\tau)(\CCC_{1/2}^+)$ is a closed subspace of $\cHH(\AAA^\sharp)$. Similarly, $c(\tau)(\CCC_{1/2}^-)$ is a closed subspace of $\cHH(\AAA)$.

\begin{lemma}\label{L:C1/2Fredpair}
$(H_{(-\infty,0)}^{-1/2}(\AAA),\CCC_{1/2}^+)$ is a Fredholm pair in $\hHH(\AAA)$ and
\[
\ind(H_{(-\infty,0)}^{-1/2}(\AAA),\CCC_{1/2}^+)\;=\;\ind(H_{(-\infty,0)}^{1/2}(\AAA),\CCC_{\max}^+).
\]
\end{lemma}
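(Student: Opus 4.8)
The plan is to read off both Fredholm pairs from the single operator $\cTT$ (equivalently, from the graph map $\check\phi$), and to verify that passing from the splitting $\cHH(\AAA)=H_{(-\infty,0)}^{1/2}(\AAA)\oplus H_{[0,\infty)}^{-1/2}(\AAA)$ to $\hHH(\AAA)=H_{(-\infty,0)}^{-1/2}(\AAA)\oplus H_{[0,\infty)}^{1/2}(\AAA)$ changes neither the dimension of the intersection nor the codimension of the sum. Throughout I write $X:=H_{(-\infty,0)}^{-1/2}(\AAA)$ and $Y:=\CCC_{1/2}^+$ inside $\hHH(\AAA)$, and I recall from Proposition \ref{P:Fredpair} that $(H_{(-\infty,0)}^{1/2}(\AAA),\CCC_{\max}^+)$ is a Fredholm pair in $\cHH(\AAA)$ with intersection $N:=H_{(-\infty,0)}^{1/2}(\AAA)\cap\CCC_{\max}^+$ and that, by Proposition \ref{P:Fredequiv} and Theorem \ref{T:indid}, $\codim(H_{(-\infty,0)}^{1/2}(\AAA)+\CCC_{\max}^+)=\codim\range\cTT<\infty$.

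First I would compute the intersection. Since $\CCC_{1/2}^+=\CCC_{\max}^+\cap H_\AAA^{1/2}(\dm,E^+|_{\dm})$ and $H_{(-\infty,0)}^{-1/2}(\AAA)\cap H_\AAA^{1/2}(\dm,E^+|_{\dm})=H_{(-\infty,0)}^{1/2}(\AAA)$ (once a section already lies in $H^{1/2}$, the weaker $-1/2$ weight imposes no further constraint), it follows that $X\cap Y=N$; equivalently, via \eqref{E:C1/2decomp} a nonzero element of ${\rm graph}(\phi^{1/2})$ has nonzero $H_{[0,\infty)}^{1/2}$-component, so only the finite-dimensional summand $N$ survives the intersection with $X$. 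This yields Definition \ref{D:Fredpair}(i) together with the identity $\dim(X\cap Y)=\dim N$.

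Next I would treat the sum, which is the crux. Let $q\colon\hHH(\AAA)\to H_{[0,\infty)}^{1/2}(\AAA)$ be the orthogonal projection along $\ker q=H_{(-\infty,0)}^{-1/2}(\AAA)=X$. From \eqref{E:C1/2decomp} one reads off $q(X+Y)=\dom\phi^{1/2}=\range\cTT\cap H_{[0,\infty)}^{1/2}(\AAA)$, and since $\ker q\subset X+Y$ this gives $X+Y=q^{-1}\big(\range\cTT\cap H_{[0,\infty)}^{1/2}(\AAA)\big)$. The key point—the exact analogue of the criss-cross argument of Subsection \ref{SS:L2situ}, with $L^2$ replaced by $H^{1/2}$—is that $\range\cTT\cap H_{[0,\infty)}^{1/2}(\AAA)$ is closed in $H_{[0,\infty)}^{1/2}(\AAA)$ of codimension exactly $\codim\range\cTT$. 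To see this I would compose the continuous, dense-range inclusion $H_{[0,\infty)}^{1/2}(\AAA)\hookrightarrow H_{[0,\infty)}^{-1/2}(\AAA)$ with the quotient onto the finite-dimensional space $H_{[0,\infty)}^{-1/2}(\AAA)/\range\cTT$: the resulting bounded map has kernel $\range\cTT\cap H_{[0,\infty)}^{1/2}(\AAA)$ and is surjective because a dense subspace of a finite-dimensional space is the whole space. Pulling this back through the bounded projection $q$ then shows that $X+Y$ is closed with $\codim(X+Y)=\codim\range\cTT$, establishing Definition \ref{D:Fredpair}(ii) and (iii).

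Finally, combining the two computations gives
\[
\ind\big(H_{(-\infty,0)}^{-1/2}(\AAA),\CCC_{1/2}^+\big)=\dim N-\codim\range\cTT=\dim N-\codim\big(H_{(-\infty,0)}^{1/2}(\AAA)+\CCC_{\max}^+\big)=\ind\big(H_{(-\infty,0)}^{1/2}(\AAA),\CCC_{\max}^+\big).
\]
I expect the main obstacle to be the codimension-matching step of the third paragraph: one must ensure that restricting the positive spectral part from $H^{-1/2}$ to $H^{1/2}$ loses no cokernel directions, which is precisely where the density of $H_{[0,\infty)}^{1/2}(\AAA)$ in $H_{[0,\infty)}^{-1/2}(\AAA)$ and the finiteness of $\codim\range\cTT$ enter; the remaining steps are bookkeeping with the graph decompositions \eqref{E:Cmaxdecomp} and \eqref{E:C1/2decomp}.
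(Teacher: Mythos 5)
Your proof is correct and follows the same overall route as the paper: the intersection is computed via $H^{-1/2}_{(-\infty,0)}(\AAA)\cap H^{1/2}_\AAA(\dm,E^+|_{\dm}) = H^{1/2}_{(-\infty,0)}(\AAA)$, and the sum is handled through the graph decomposition \eqref{E:C1/2decomp}, reducing everything to the range of $\cTT$. The one place where you genuinely depart from the paper is the codimension-matching step: the paper writes $H^{-1/2}_{(-\infty,0)}(\AAA)+\CCC_{1/2}^+ = H^{-1/2}_{(-\infty,0)}(\AAA)\oplus\dom\phi^{1/2}$ and justifies the closedness and the equality $\dim H^{1/2}_{[0,\infty)}(\AAA)/\dom\phi^{1/2} = \codim\range\cTT$ by appealing to the criss-cross reduction of Subsection \ref{SS:L2situ} (the finite-dimensional complement $\check W$), whereas you get both in one stroke by composing the dense, continuous inclusion $H^{1/2}_{[0,\infty)}(\AAA)\hookrightarrow H^{-1/2}_{[0,\infty)}(\AAA)$ with the quotient onto the finite-dimensional space $H^{-1/2}_{[0,\infty)}(\AAA)/\range\cTT$ and noting that a dense subspace of a finite-dimensional space is the whole space. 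That argument is sound and is arguably a cleaner, self-contained packaging of the same density-plus-finite-codimension mechanism; it buys you independence from the details of the $L^2$ criss-cross argument, at the cost of not reusing the machinery already set up there.
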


\begin{proof}
First,
\[
\begin{aligned}
H_{(-\infty,0)}^{-1/2}(\AAA)\cap\CCC_{1/2}^+\;&=\;H_{(-\infty,0)}^{-1/2}(\AAA)\cap\CCC_{\max}^+\cap H^{1/2}_\AAA(\p M,E^+|_{\p M}) \\
&=\;H_{(-\infty,0)}^{1/2}(\AAA)\cap\CCC_{\max}^+.
\end{aligned}
\]
By \eqref{E:C1/2decomp},
\[
H_{(-\infty,0)}^{-1/2}(\AAA)+\CCC_{1/2}^+\;=\;H_{(-\infty,0)}^{-1/2}(\AAA)+{\rm graph}(\phi^{1/2})\;=\;H_{(-\infty,0)}^{-1/2}(\AAA)\oplus\dom\phi^{1/2},
\]
which is closed in $\hHH(\AAA)$. Then
\begin{multline*}
\dim\hHH(\AAA)/(H_{(-\infty,0)}^{-1/2}(\AAA)+\CCC_{1/2}^+)\;=\;\dim H_{[0,\infty)}^{1/2}(\AAA)/\dom\phi^{1/2} \\
\;=\;\dim H_{[0,\infty)}^{-1/2}(\AAA)/\dom\phi\;=\;\dim\cHH(\AAA)/(H_{(-\infty,0)}^{1/2}(\AAA)+\CCC_{\max}^+).
\end{multline*}
The lemma is proved.
\end{proof}

\begin{remark}\label{R:C1/2Fredpair}
One also has that $(H_{(-\infty,0]}^{-1/2}(\AAA^\sharp),\CCC_{1/2}^-)$ is a Fredholm pair in $\hHH(\AAA^\sharp)$ and
\[
\ind(H_{(-\infty,0]}^{-1/2}(\AAA^\sharp),\CCC_{1/2}^-)\;=\;\ind(H_{(-\infty,0]}^{1/2}(\AAA^\sharp),\CCC_{\max}^-).
\]
\end{remark}

\begin{theorem}\label{T:Cauchyellbc}
$c(\tau)(\CCC_{1/2}^-)$ is an elliptic boundary condition for $\DD^+$, whose adjoint boundary condition is $c(\tau)(\CCC_{1/2}^+)$ and $\ind\DD_{c(\tau)(\CCC_{1/2}^-)}^+=0$.
\end{theorem}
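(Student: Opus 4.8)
The plan is to verify the three assertions in turn: that $B:=c(\tau)(\CCC_{1/2}^-)$ is elliptic, that $B^{\rm ad}=c(\tau)(\CCC_{1/2}^+)$, and that $\ind\DD_B^+=0$. The regularity halves of ellipticity come for free. Since $\CCC_{1/2}^-\subset H^{1/2}_{\AAA^\sharp}(\dm,E^-|_{\dm})$ by definition, Lemma \ref{L:admap} gives $B=c(\tau)(\CCC_{1/2}^-)\subset H^{1/2}_\AAA(\dm,E^+|_{\dm})$, which is the first half of Definition \ref{D:ellbc}; the same lemma applied to $\CCC_{1/2}^+\subset H^{1/2}_\AAA$ yields $c(\tau)(\CCC_{1/2}^+)\subset H^{1/2}_{\AAA^\sharp}$. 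Hence, once the adjoint condition is identified with $c(\tau)(\CCC_{1/2}^+)$, ellipticity follows immediately, and I would spend the bulk of the work on that identification.

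To identify $B^{\rm ad}$, I would first unwind Definition \ref{D:bc}: using $c(\tau)^2=-\id$, any $\bfu=c(\tau)\bfw\in B$ satisfies $c(\tau)\bfu=-\bfw$, so $B^{\rm ad}=(c(\tau)B)^0=(\CCC_{1/2}^-)^0$, the annihilator of $\CCC_{1/2}^-$ in $\cHH(\AAA^\sharp)$. The easy inclusion $c(\tau)(\CCC_{1/2}^+)\subset B^{\rm ad}$ is then a Green's-formula computation exactly as in Proposition \ref{P:anniheq}: for $\bfz\in\CCC_{1/2}^+$ and $\bfw\in\CCC_{1/2}^-$, lift to $u\in\ker\DD_{\max}^+$, $v\in\ker\DD_{\max}^-$ with $\cRR u=\bfz$, $\cRR v=\bfw$; the left-hand side of \eqref{E:genGreensfor} vanishes, giving $(c(\tau)\bfz;\bfw)_{L^2}=0$, so $c(\tau)\bfz\in(\CCC_{1/2}^-)^0$.

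The main obstacle is the reverse inclusion, which I would obtain by an index count rather than directly; the delicacy is purely bookkeeping, since $c(\tau)$ interchanges $\cHH$ and $\hHH$ and flips the sign of the boundary spectrum. Starting from the Fredholm pair $(H_{(-\infty,0)}^{-1/2}(\AAA),\CCC_{1/2}^+)$ in $\hHH(\AAA)$ of Lemma \ref{L:C1/2Fredpair}, I apply the isomorphism $c(\tau):\hHH(\AAA)\cong\cHH(\AAA^\sharp)$ of Lemma \ref{L:admap} to produce the Fredholm pair $(H_{(0,\infty)}^{-1/2}(\AAA^\sharp),c(\tau)(\CCC_{1/2}^+))$ in $\cHH(\AAA^\sharp)$, of index $\ind\DD_{\rm APS}^+$. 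Separately, dualizing the pair $(H_{(-\infty,0]}^{-1/2}(\AAA^\sharp),\CCC_{1/2}^-)$ of Remark \ref{R:C1/2Fredpair} via Proposition \ref{P:Fredequiv}, and using $(H_{(-\infty,0]}^{-1/2}(\AAA^\sharp))^0=H_{(0,\infty)}^{-1/2}(\AAA^\sharp)$ together with \eqref{E:indeq}, yields the pair $(H_{(0,\infty)}^{-1/2}(\AAA^\sharp),(\CCC_{1/2}^-)^0)$, also of index $\ind\DD_{\rm APS}^+$. The two pairs have the same first slot and the same index, while the previous paragraph supplies $c(\tau)(\CCC_{1/2}^+)\subset(\CCC_{1/2}^-)^0$; Lemma \ref{L:inclu-equal} then forces $B^{\rm ad}=(\CCC_{1/2}^-)^0=c(\tau)(\CCC_{1/2}^+)$, finishing ellipticity.

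For the index I would show both kernels vanish. By Corollary \ref{C:UCP-2}, $\cRR$ identifies $\ker\DD_B^+$ with $\CCC_{\max}^+\cap B$. If $x$ lies there, then $x=c(\tau)\bfw$ with $\bfw\in\CCC_{1/2}^-\subset H^{1/2}_{\AAA^\sharp}$, so $x\in H^{1/2}_\AAA$ and hence $x\in\CCC_{\max}^+\cap H^{1/2}_\AAA=\CCC_{1/2}^+$; thus $x\in\CCC_{1/2}^+\cap c(\tau)(\CCC_{1/2}^-)\subset\CCC^+\cap c(\tau)(\CCC^-)=\{0\}$ by the orthogonal decomposition of Theorem \ref{T:L2decomp}. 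So $\ker\DD_B^+=0$, and the argument with $\pm$ exchanged gives $\ker\DD_{B^{\rm ad}}^-=0$. By \eqref{E:defofind}, $\ind\DD_B^+=0-0=0$.
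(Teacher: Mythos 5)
Your proposal is correct and follows essentially the same route as the paper's proof: the adjoint identification via the easy Green's-formula inclusion plus an index count using Lemma \ref{L:C1/2Fredpair}, Remark \ref{R:C1/2Fredpair}, Proposition \ref{P:Fredequiv} and Lemma \ref{L:inclu-equal}, and the vanishing of both kernels via the orthogonality of Theorem \ref{T:L2decomp}. The only cosmetic difference is that you compare both pairs directly to $\ind\DD_{\rm APS}^+$ via \eqref{E:indeq}, where the paper routes its chain of equalities through Remark \ref{R:anniheq}; the content is identical.
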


\begin{proof}
From the discussion above, $c(\tau)(\CCC_{1/2}^-)\subset H_\AAA^{1/2}(\p M,E^+|_{\p M})$ and is a boundary condition. By \eqref{E:adbc}, to prove the adjoint property, it suffices to show that $c(\tau)(\CCC_{1/2}^+)=(\CCC_{1/2}^-)^0$.

Note that $c(\tau)$ maps the Fredholm pair $(H_{(-\infty,0)}^{-1/2}(\AAA),\CCC_{1/2}^+)$ of $\hHH(\AAA)$ to a Fredholm pair $(H_{(0,\infty)}^{-1/2}(\AAA^\sharp),c(\tau)(\CCC_{1/2}^+))$ of $\cHH(\AAA^\sharp)$ and
\begin{multline*}
\ind(H_{(0,\infty)}^{-1/2}(\AAA^\sharp),c(\tau)(\CCC_{1/2}^+))=\ind(H_{(-\infty,0)}^{1/2}(\AAA),\CCC_{\max}^+) \\
\xlongequal{c(\tau)}\ind(H_{(0,\infty)}^{1/2}(\AAA^\sharp),c(\tau)(\CCC_{\max}^+))\xlongequal{\text{Remark }\ref{R:anniheq}}\ind(H_{(0,\infty)}^{1/2}(\AAA^\sharp),(\CCC_{\max}^-)^0) \\
=-\ind(H_{(-\infty,0]}^{1/2}(\AAA^\sharp),\CCC_{\max}^-)\xlongequal{\text{Remark }\ref{R:C1/2Fredpair}}-\ind(H_{(-\infty,0]}^{-1/2}(\AAA^\sharp),\CCC_{1/2}^-) \\
=\ind(H_{(0,\infty)}^{-1/2}(\AAA^\sharp),(\CCC_{1/2}^-)^0).
\end{multline*}
One then uses the argument as in the proof of Proposition \ref{P:anniheq} to show that $c(\tau)(\CCC_{1/2}^+)\subset(\CCC_{1/2}^-)^0$. Therefore $c(\tau)(\CCC_{1/2}^+)=(\CCC_{1/2}^-)^0$ by Lemma \ref{L:inclu-equal}.

By Theorem \ref{T:L2decomp}, one gets
\[
\begin{aligned}
c(\tau)(\CCC_{1/2}^-)\;\subset\;c(\tau)(\CCC^-)\;=\;(\CCC^+)^\perp\;\Longrightarrow\;&c(\tau)(\CCC_{1/2}^-)\cap\CCC^+ \\
\;=\;&c(\tau)(\CCC_{1/2}^-)\cap\CCC_{\max}^+\cap L^2(\p M,E^+|_{\p M}) \\
\;=\;&c(\tau)(\CCC_{1/2}^-)\cap\CCC_{\max}^+\;=\;\{0\}.
\end{aligned}
\]
So $\ker\DD_{c(\tau)(\CCC_{1/2}^-)}^+=\{0\}$. Also $\ker\DD_{c(\tau)(\CCC_{1/2}^+)}^-=\{0\}$. Hence
\[
\ind\DD_{c(\tau)(\CCC_{1/2}^-)}^+\;=\;\dim\ker\DD_{c(\tau)(\CCC_{1/2}^-)}^+-\dim\ker\DD_{c(\tau)(\CCC_{1/2}^+)}^-\;=\;0.
\]
\end{proof}

\section*{Acknowledgments}
The author would like to thank Professor Maxim Braverman for all the helpful discussions and suggestions in writing this paper.

\bibliographystyle{amsplain}

\begin{bibdiv}
\begin{biblist}

\bib{ASS}{article}{
      author={Avron, J.},
      author={Seiler, R.},
      author={Simon, B.},
       title={The index of a pair of projections},
        date={1994},
        ISSN={0022-1236},
     journal={J. Funct. Anal.},
      volume={120},
      number={1},
       pages={220\ndash 237},
         url={https://doi.org/10.1006/jfan.1994.1031},
      review={\MR{1262254}},
}

\bib{BBC}{article}{
      author={Ballmann, Werner},
      author={Br\"uning, Jochen},
      author={Carron, Gilles},
       title={Regularity and index theory for {D}irac-{S}chr\"odinger systems
  with {L}ipschitz coefficients},
        date={2008},
        ISSN={0021-7824},
     journal={J. Math. Pures Appl. (9)},
      volume={89},
      number={5},
       pages={429\ndash 476},
         url={https://doi.org/10.1016/j.matpur.2008.02.004},
      review={\MR{2416671}},
}

\bib{BB12}{incollection}{
      author={B{\"a}r, Christian},
      author={Ballmann, Werner},
       title={Boundary value problems for elliptic differential operators of
  first order},
        date={2012},
   booktitle={Surveys in differential geometry. {V}ol. {XVII}},
      series={Surv. Differ. Geom.},
      volume={17},
   publisher={Int. Press, Boston, MA},
       pages={1\ndash 78},
         url={http://dx.doi.org/10.4310/SDG.2012.v17.n1.a1},
      review={\MR{3076058}},
}

\bib{BFW}{incollection}{
      author={Booss-Bavnbek, B.},
      author={Furutani, K.},
      author={Wojciechowski, K.~P.},
       title={The geometry of {C}auchy data spaces},
        date={2003},
   booktitle={Jean {L}eray '99 {C}onference {P}roceedings},
      series={Math. Phys. Stud.},
      volume={24},
   publisher={Kluwer Acad. Publ., Dordrecht},
       pages={321\ndash 354},
      review={\MR{2051496}},
}

\bib{Booss00}{incollection}{
      author={Booss-Bavnbek, Bernhelm},
       title={Unique continuation property for {D}irac operators, revisited},
        date={2000},
   booktitle={Geometry and topology: {A}arhus (1998)},
      series={Contemp. Math.},
      volume={258},
   publisher={Amer. Math. Soc., Providence, RI},
       pages={21\ndash 32},
         url={https://doi.org/10.1090/conm/258/04053},
      review={\MR{1778094}},
}

\bib{BF98}{article}{
      author={Booss-Bavnbek, Bernhelm},
      author={Furutani, Kenro},
       title={The {M}aslov index: a functional analytical definition and the
  spectral flow formula},
        date={1998},
        ISSN={0387-3870},
     journal={Tokyo J. Math.},
      volume={21},
      number={1},
       pages={1\ndash 34},
         url={https://doi.org/10.3836/tjm/1270041982},
      review={\MR{1630119}},
}

\bib{BFO}{article}{
      author={Booss-Bavnbek, Bernhelm},
      author={Furutani, Kenro},
      author={Otsuki, Nobukazu},
       title={Criss-cross reduction of the {M}aslov index and a proof of the
  {Y}oshida-{N}icolaescu theorem},
        date={2001},
        ISSN={0387-3870},
     journal={Tokyo J. Math.},
      volume={24},
      number={1},
       pages={113\ndash 128},
         url={https://doi.org/10.3836/tjm/1255958316},
      review={\MR{1844422}},
}

\bib{BW93}{book}{
      author={Boo\ss~Bavnbek, Bernhelm},
      author={Wojciechowski, Krzysztof~P.},
       title={Elliptic boundary problems for {D}irac operators},
      series={Mathematics: Theory \& Applications},
   publisher={Birkh\"auser Boston, Inc., Boston, MA},
        date={1993},
        ISBN={0-8176-3681-1},
         url={https://doi.org/10.1007/978-1-4612-0337-7},
      review={\MR{1233386}},
}

\bib{BrShi17-2}{article}{
      author={Braverman, Maxim},
      author={Shi, Pengshuai},
       title={{APS} index theorem for even-dimensional manifolds with
  non-compact boundary},
        date={2017},
      eprint={1708.08336},
         url={https://arxiv.org/abs/1708.08336},
}

\bib{BrShi17}{article}{
      author={Braverman, Maxim},
      author={Shi, Pengshuai},
       title={The {A}tiyah-{P}atodi-{S}inger index on manifolds with
  non-compact boundary},
        date={2017},
      eprint={1706.06737},
         url={http://arxiv.org/abs/1706.06737},
}

\bib{Kato95}{book}{
      author={Kato, Tosio},
       title={Perturbation theory for linear operators},
      series={Classics in Mathematics},
   publisher={Springer-Verlag, Berlin},
        date={1995},
        ISBN={3-540-58661-X},
        note={Reprint of the 1980 edition},
      review={\MR{1335452}},
}

\bib{LM89}{book}{
      author={Lawson, H.~B.},
      author={Michelsohn, M.-L.},
       title={Spin geometry},
   publisher={Princeton University Press},
     address={Princeton, New Jersey},
        date={1989},
}

\bib{Seeley}{article}{
      author={Seeley, R.~T.},
       title={Singular integrals and boundary value problems},
        date={1966},
        ISSN={0002-9327},
     journal={Amer. J. Math.},
      volume={88},
       pages={781\ndash 809},
         url={https://doi.org/10.2307/2373078},
      review={\MR{0209915}},
}

\bib{Woj}{article}{
      author={Wojciechowski, Krzysztof~P.},
       title={On the {C}alder\'on projections and spectral projections of the
  elliptic operators},
        date={1988},
        ISSN={0379-4024},
     journal={J. Operator Theory},
      volume={20},
      number={1},
       pages={107\ndash 115},
      review={\MR{972184}},
}

\end{biblist}
\end{bibdiv}

\end{document}